\theoremstyle{plain}
	\newtheorem{thm}{Theorem}[section]
	\newtheorem{lem}[thm]{Lemma}
	\newtheorem{prop}[thm]{Proposition}
	\newtheorem{cor}[thm]{Corollary}
\theoremstyle{definition}
\theoremstyle{remark}
	\newtheorem{rem}[thm]{Remark}
\DeclareMathOperator{\Diff}{Diff}
\DeclareMathOperator{\Isom}{Isom}
\newcommand{\bbZ}{\mathbb{Z}}
\newcommand{\bbR}{\mathbb{R}}
\newcommand{\bbC}{\mathbb{C}}
\newcommand{\bbP}{\mathbb{P}}
\begin{document}

\title{Uniqueness of free 2-periodicities of links}

\author{Ken'ichi Yoshida}
\address{International Institute for Sustainability with Knotted Chiral Meta Matter (WPI-SKCM$^2$), Hiroshima University, 1-3-1 Kagamiyama, Higashi-Hiroshima, Hiroshima 739-8531, Japan}
\email{kncysd@hiroshima-u.ac.jp}
\subjclass[2020]{57K10, 57K32, 57K35, 57M10}
\keywords{Projective links, Torus links, JSJ decomposition}
\date{}

\begin{abstract}
We show that if two links in the real projective 3-space $\bbR \bbP^{3}$ have isotopic preimages in the 3-sphere $S^{3}$ by the double covering map, 
then they are themselves isotopic in $\bbR \bbP^{3}$. 
\end{abstract}

\maketitle

\section{Introduction}
\label{section:intro}

Links in the real projective 3-space $\bbR \bbP^{3}$ (also known as projective links) have been investigated in various ways, 
often using diagrams, as in \cite{Drobotukhina90, MN23, MRR23}. 
More generally, links in lens spaces have been considered \cite{BGH08, CMM13}. 
For a link $L$ in a lens space, 
the preimage (also called the lift) of $L$ by the universal covering map of the lens space 
is a freely periodic link in the 3-sphere $S^{3}$ according to Hartley~\cite{Hartley81}. 
Free periodicities of links in $S^{3}$ are quite restricted, especially for knots. 
For instance, Sakuma~\cite{Sakuma86} and Boileau and Flapan~\cite{BF87} independently proved that 
a free $n$-periodicity of a non-trivial prime knot in $S^{3}$ is unique for each $n \geq 2$. 
More precisely, for a non-trivial prime knot $K$ in $S^{3}$ and the group $\Diff^{*}(S^{3}, K)$ consisting of the diffeomorphisms of the pair $(S^{3}, K)$ which preserve the orientations of $S^{3}$ and $K$, 
if $f, g \in \Diff^{*} (S^{3}, K)$ have no fixed points and satisfy $f^{n} = g^{n} = \mathrm{id}$, 
then $f$ is conjugate in $\Diff^{*}(S^{3}, K)$ to a power of $g$. 
Equivalently, if $K_{0} \subset L(n, q_{0})$ and $K_{1} \subset L(n, q_{1})$ are knots in lens spaces, 
and their premiages in $S^{3}$ are isotopic knots, 
then the pairs $(L(n, q_{0}), K_{0})$ and $(L(n, q_{1}), K_{1})$ are diffeomorphic. 
Several constraints for freely periodic knots in $S^{3}$ are known \cite{BR23, BR24, Chbili03, Chbili24, Hartley81, Nozaki18}. 
Compatibility of free periodicities and alternating diagrams has also been considered \cite{Boyle21, CH23}. 
Furthermore, the finite group actions on knots in $S^{3}$ were classified in \cite{BRW23}.

In this paper, we consider links in $\bbR \bbP^{3}$ and their preimages in $S^{3}$. 
Every link is assumed to be tame. 
Let $\pi \colon S^{3} \to \bbR \bbP^{3}$ denote the universal covering map, which is a double cover. 
Two links $L_{0}$ and $L_{1}$ in $\bbR \bbP^{3}$ are called \emph{isotopic} 
if there is an ambient isotopy of $\bbR \bbP^{3}$ that maps $L_{0}$ to $L_{1}$. 
Then their preimages by the covering map $\pi$ are also isotopic in $S^{3}$ 
since the isotopy is lifted. 
The main theorem asserts that the converse also holds. 

\begin{thm}
\label{thm:main}
Let $L_{0}$ and $L_{1}$ be links in $\bbR \bbP^{3}$. 
Suppose that the links $\widetilde{L_{0}} = \pi^{-1}(L_{0})$ and $\widetilde{L_{1}} = \pi^{-1}(L_{1})$ in $S^{3}$ are isotopic. 
Then $L_{0}$ and $L_{1}$ are isotopic. 
\end{thm}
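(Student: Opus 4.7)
The plan is to recast the theorem as an equivariant uniqueness statement on $S^{3}$ and attack it via the JSJ decomposition of the link exterior. First, using an ambient isotopy of $S^{3}$ taking $\widetilde{L_{0}}$ to $\widetilde{L_{1}}$, I identify both preimages with a single link $L \subset S^{3}$. This yields two free orientation-preserving involutions $\tau_{0}, \tau_{1} \in \Diff^{+}(S^{3}, L)$, the deck transformations of the two double covers, whose quotient pairs are $(\bbR \bbP^{3}, L_{0})$ and $(\bbR \bbP^{3}, L_{1})$ respectively. It therefore suffices to produce $h \in \Diff^{+}(S^{3}, L)$ with $h \tau_{0} h^{-1} = \tau_{1}$: such an $h$ descends to an orientation-preserving diffeomorphism $\bar{h} \colon (\bbR \bbP^{3}, L_{0}) \to (\bbR \bbP^{3}, L_{1})$, and since $\Diff^{+}(\bbR \bbP^{3})$ is connected (the Smale conjecture for $\bbR \bbP^{3}$), $\bar{h}$ is ambient isotopic to the identity, giving the desired isotopy from $L_{0}$ to $L_{1}$ in $\bbR \bbP^{3}$.

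Next I would reduce to the case where $L$ is non-split in $S^{3}$, via an equivariant sphere-theorem argument that lets one treat each prime summand of the pair $(S^{3}, L)$ separately. For a non-split link, the exterior $E(L) = S^{3} \setminus N(L)$ is irreducible, and its JSJ system $\mathcal{T}$ of essential tori is canonical and thus unique up to isotopy. Because of this uniqueness, each $\tau_{i}$ is conjugate, by a diffeomorphism of the pair $(S^{3}, L)$, to an involution preserving an equivariant JSJ system; after a further comparison I may assume both $\tau_{0}$ and $\tau_{1}$ preserve the same system $\mathcal{T}$ setwise.

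The heart of the proof is then to compare the restrictions of $\tau_{0}$ and $\tau_{1}$ on each JSJ piece $M$. On hyperbolic pieces, Mostow--Prasad rigidity forces each involution to be isotopic to an isometry of the unique hyperbolic structure of finite volume, and the resulting involutions lie in the finite isometry group; since both are free and induce compatible actions on the boundary tori, they are conjugate. On Seifert fibered pieces, the Seifert fibration is essentially unique, so each involution preserves it up to isotopy, and the classification reduces to involutions of the base $2$-orbifold together with their action on the fibers. The piecewise conjugacies must then be glued across the JSJ tori; this is controlled by the fact that an isotopy class of involution on $T^{2}$ is determined by its action on $H_{1}$, which matches on both sides of each JSJ torus by construction.

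The main technical obstacle, signalled by the paper's keywords, is the torus-link case: when $L$ is a non-trivial torus link, $E(L)$ itself is Seifert fibered, admits multiple fibrations, and has an enlarged symmetry group, so the generic JSJ argument above does not apply cleanly. I would treat this case directly using the standard genus-one Heegaard splitting $S^{3} = V_{1} \cup V_{2}$: a free involution of $(S^{3}, L)$ preserving a torus link either preserves each solid torus or exchanges the two, and in each subcase an explicit analysis of the induced involution on the Heegaard torus (together with its effect on the fibers) shows that $\tau_{0}$ and $\tau_{1}$ are conjugate in $\Diff^{+}(S^{3}, L)$, completing the argument.
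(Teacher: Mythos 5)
Your reduction (find $h \in \Diff^{+}(S^{3}, L)$ conjugating $\tau_{0}$ to $\tau_{1}$, then use triviality of the mapping class group of $\bbR\bbP^{3}$) is sound and matches the paper's framework, but the core of your argument has a genuine gap: on the hyperbolic JSJ pieces you assert that the two free involutions, once made isometric via Mostow--Prasad rigidity, ``are conjugate'' because they are free and compatible on the boundary tori. Nothing forces two free involutions in the finite isometry group of a hyperbolic piece to be conjugate, and this is exactly the point where the order of the deck group matters. Indeed, your outline never uses that the covering is a double cover: run verbatim for free $\bbZ/4\bbZ$-actions it would ``prove'' the analogous statement for links in $L(4,1)$, which is false (Manfredi's examples, and infinitely many hyperbolic ones in the author's earlier paper, are cited in the introduction). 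The paper closes this gap by a mechanism absent from your proposal: after Mostow rigidity, both involutions lie in $\Isom(\widetilde{M_{1}})$ and preserve meridians, so the (a priori larger) finite group they generate extends to a smooth finite group action on all of $S^{3}$; by the spherical space form conjecture (Dinkelbach--Leeb equivariant Ricci flow) this action is conjugate to an isometric one, and the elementary fact that the only free isometric involution of the round $S^{3}$ is the antipodal map forces the two actions to \emph{coincide}, not merely be conjugate. This is the step that is special to $2$-periodicity, and your proof has no substitute for it.

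Two further steps are asserted rather than proved and would not go through as stated. First, the gluing of piecewise conjugacies across the JSJ tori: knowing that an involution of $T^{2}$ is determined up to isotopy by its action on $H_{1}$ does not let you match conjugating diffeomorphisms chosen independently on adjacent pieces, and arranging agreement on the common torus is the hard part. The paper avoids this entirely by working with the quotient links: it introduces \emph{outermost} JSJ pieces (complement a union of solid tori and knotted hole balls), proves one always exists and that outermostness is detected by the lift, re-embeds such a piece as a link complement in $\bbR\bbP^{3}$, settles it by the hyperbolic or Seifert case, and then extends the isotopy over the remaining solid tori using the solid-torus theorem of Kotorii--Mahmoudi--Matsumoto--Yoshida, rather than gluing conjugations. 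Second, the Seifert fibered/torus-link case: your ``explicit analysis of the induced involution on the Heegaard torus'' is only a plan; the paper instead classifies all links in $\bbR\bbP^{3}$ with Seifert fibered complement as explicit generalized torus links $T_{\bbR\bbP^{3}}(p,q;n)$, computes their lifts $T_{S^{3}}(p,-p+2q;n)$, and checks the statement against Budney's classification of torus links in $S^{3}$, including the exceptional coincidences among the $T_{S^{3}}(p,q;n)$. Without the spherical-space-form input, the correct handling of the free-involution uniqueness, and a concrete scheme replacing the torus-by-torus gluing, the proposal does not yet constitute a proof.
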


In other words, a free 2-periodicity of a link in $S^{3}$ is unique. 
In the case that $\widetilde{L_{0}}$ and $\widetilde{L_{1}}$ are prime knots, 
the assertion follows from the uniqueness of free periodicities by \cite{BF87, Sakuma86}. 
Note that the supposed isotopy between $\widetilde{L_{0}}$ and $\widetilde{L_{1}}$ may not preserve the free 2-periodicity.

Two non-isotopic links in the lens space $L(4,1)$ 
may have isotopic preimages in $S^{3}$ \cite{Manfredi14}. 
More strongly, there are infinitely many pairs of such hyperbolic links \cite{Yoshida25}. 
In other words, a free 4-periodicity of a link in $S^{3}$ is not unique in general. 
In contrast, 
Kotorii, Mahmoudi, Matsumoto, and the author \cite{KMMY25} showed that 
in the case that $X$ is the solid torus $S^{1} \times D^{2}$, the thickened torus $T^{2} \times I$, or the 3-torus $T^{3}$, 
if two links in $X$ have isotopic preimages in a finite cover of $X$, 
then they are isotopic.

We will prove the main theorem by a similar argument to that in \cite{KMMY25}. 
In Section~\ref{section:hyp}, 
we will show the main theorem for the hyperbolic links 
using the Mostow rigidity theorem and the resolution of the spherical space form conjecture. 
In Section~\ref{section:seifert}, 
we will show the main theorem for the links whose complements are Seifert fibered. 
We will explicitly describe these links, 
which are generalizations of the torus links. 
In Section~\ref{section:jsj}, 
we will show the main theorem for all links. 
By the prime and JSJ decompositions of the complement, 
we reduce to the above cases and a result for links in the solid torus \cite[Theorem 4.5]{KMMY25}. 
Then we need the notion of an outermost JSJ piece as a starting point. 
We remark that the main theorem holds for the oriented links in $\bbR \bbP^{3}$. 
This can be shown in the same manner 
by being careful with the orientation.

\section{Hyperbolic links}
\label{section:hyp}

In this section, we show that a free 2-periodicity of hyperbolic links in $S^{3}$ is unique. 
For this purpose, we need to use the now proven spherical space form conjecture as follows. 
This is a consequence of the geometrization of 3-manifolds and 3-orbifolds 
and was explicitly shown by Dinkelbach and Leeb~\cite{DL09} using equivariant Ricci flow with surgery. 

\begin{thm}[Spherical space form conjecture]
\label{thm:spherical}
A smooth action of a finite group on $S^{3}$ is smoothly conjugate to an isometric action. 
In other words, $S^{3}$ admits a spherical metric in which the action is isometric. 
\end{thm}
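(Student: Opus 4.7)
The plan is to invoke Hamilton--Perelman Ricci flow with surgery in an equivariant setting, following the strategy of Dinkelbach and Leeb. Given a smooth action of a finite group $G$ on $S^{3}$, first produce a $G$-invariant smooth Riemannian metric $g_{0}$ by averaging an arbitrary smooth metric over the finite group $G$. Then evolve $g_{0}$ by the Ricci flow $\partial_{t} g = -2\,\mathrm{Ric}(g)$. Since the Ricci tensor is a natural geometric object depending only on the metric, every isometry $\phi \in G$ of $g_{0}$ is automatically an isometry of $g(t)$ on the maximal interval of existence; in particular, high-curvature regions form in a $G$-invariant manner.

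When a finite-time singularity develops, Perelman's surgery procedure is applied on high-curvature regions modelled on $\epsilon$-necks and caps. The crucial point is that each surgery can be performed $G$-equivariantly: the high-curvature set is $G$-invariant, and by Perelman's canonical neighborhood theorem the local geometry is essentially rigid, so the cutting horizons and the standard caps can be chosen $G$-invariantly. This yields a Ricci flow with equivariant surgery starting from $(S^{3}, g_{0})$.

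By Perelman's finite-time extinction theorem for simply connected closed $3$-manifolds, the flow becomes extinct in finite time; after finitely many equivariant surgeries, the remaining components are diffeomorphic to round spheres on which $G$ acts isometrically. Reversing the equivariant surgeries and the flow, and tracing the diffeomorphism type throughout, produces a $G$-equivariant diffeomorphism $\Phi \colon S^{3} \to S^{3}$ between $(S^{3}, g_{0})$ and the round sphere. Conjugating the original action by $\Phi$ gives an isometric action on the round $S^{3}$, as desired.

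The hard part will be ensuring that every combinatorial choice in Perelman's surgery (which $\epsilon$-necks to cut, where precisely to cut, and how to glue in standard caps) can be made $G$-equivariantly without spoiling the delicate curvature estimates that drive the argument. This requires the near-rigidity of canonical neighborhoods under a symmetry group together with an equivariant Cheeger--Gromov compactness argument to propagate the $G$-action through each singular time. Once this equivariant framework is in place, the smooth conjugacy of the original action to an isometric one follows by concatenating the equivariant flow and surgery maps.
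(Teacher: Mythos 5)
The paper does not prove this statement at all: it is quoted as the (now resolved) spherical space form conjecture, with a citation to Dinkelbach and Leeb, who established it by exactly the equivariant Ricci-flow-with-surgery strategy you outline. So your route coincides with the paper's source, and in the context of this paper the correct move is simply to cite that work rather than to reprove it.

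As a standalone argument, however, your text is a program rather than a proof, and the two steps you defer are precisely the content of the Dinkelbach--Leeb paper. First, the claim that necks and caps can be cut and capped off $G$-invariantly ``without spoiling the delicate curvature estimates'' is the main technical achievement there, not a routine consequence of canonical-neighborhood rigidity. Second, and more seriously, the final step ``reversing the equivariant surgeries and the flow \dots\ produces a $G$-equivariant diffeomorphism to the round sphere'' is not a formality: surgery changes the manifold, so an isometric action on the post-extinction round components does not by itself conjugate the original action on $S^{3}$ to an isometric one. One must reconstruct the action backwards through every surgery time, which requires classifying smooth finite group actions on necks and standard caps, equivariant connected-sum/reassembly arguments, and input such as the Smale conjecture $\Diff(S^{3}) \simeq \mathrm{O}(4)$. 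Note also that the present paper needs the theorem for actions that are not assumed free (see the Remark following Theorem~\ref{thm:hyp-main}), where fixed-point sets add further case analysis. None of this is supplied, so the proposal records the correct strategy but not a proof; for this paper a citation to Dinkelbach--Leeb is the appropriate substitute.
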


We consider a condition in a general setting. 
Let $L_{0}$ and $L_{1}$ be hyperbolic links in a 3-manifold $X$. 
In other words, the complement $M_{i}$ of $L_{i}$ for each $i = 0,1$ admits a hyperbolic metric of finite volume. 
Let $\pi \colon \widetilde{X} \to X$ be a finite regular covering. 
Suppose that the preimages $\widetilde{L_{0}} = \pi^{-1}(L_{0})$ and $\widetilde{L_{1}} = \pi^{-1}(L_{1})$ are isotopic in $\widetilde{X}$. 
Then the complement $\widetilde{M_{i}}$ of $\widetilde{L_{i}}$ is a finite cover of $M_{i}$ 
and is endowed with the lifted hyperbolic metric. 
The isotopy between $\widetilde{L_{0}}$ and $\widetilde{L_{1}}$ induces a homeomorphism $f \colon \widetilde{M_{0}} \to \widetilde{M_{1}}$, 
which is isotopic to an isometry $\tilde{\iota} \colon \widetilde{M_{0}} \to \widetilde{M_{1}}$ by the Mostow rigidity theorem \cite{Mostow73, Prasad73} and the fact that homotopic homeomorphisms between Haken 3-manifolds are isotopic \cite{Waldhausen68}. 
The finite group $G = \pi_{1}(X) / \pi_{1}(\widetilde{X})$ acts on $\widetilde{X}$ by the deck transformations. 
This action induces free isometric actions $\alpha_{0}$ and $\alpha_{1}$ of $G$ on the hyperbolic 3-manifolds $\widetilde{M_{0}}$ and $\widetilde{M_{1}}$. 
The isometry $\tilde{\iota} \colon \widetilde{M_{0}} \to \widetilde{M_{1}}$ induces the $G$-action $\tilde{\iota}_{*}(\alpha_{0})$ on $\widetilde{M_{1}}$ from the $G$-action $\alpha_{0}$ on $\widetilde{M_{0}}$. 

\begin{lem}[\cite{KMMY25} Lemma 4.2]
\label{lem:action}
If the $G$-actions of $\tilde{\iota}_{*}(\alpha_{0})$ and $\alpha_{1}$ on $\widetilde{M_{1}}$ coincide, 
then the pairs $(X, L_{0})$ and $(X, L_{1})$ are homeomorphic. 
Furthermore, suppose that if any self-homeomorphism $f$ of $X$ lifts to a self-homeomorphism $\tilde{f}$ of $\widetilde{X}$ isotopic to the identity, 
then $f$ is isotopic to the identity. 
In this case, $L_{0}$ and $L_{1}$ are isotopic in $X$. 
\end{lem}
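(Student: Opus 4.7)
The plan is to exploit the equivariance of the isometry $\tilde{\iota}$ to descend it through the covering, then extend the resulting map across the link components, and finally upgrade the pair homeomorphism to an ambient isotopy via the lifting hypothesis.

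For the first assertion, I would start by noting that the condition $\tilde{\iota}_{*}(\alpha_{0}) = \alpha_{1}$ is exactly the equivariance relation $\tilde{\iota} \circ \alpha_{0}(g) = \alpha_{1}(g) \circ \tilde{\iota}$ for every $g \in G$, so $\tilde{\iota}$ descends to a homeomorphism $\iota \colon M_{0} \to M_{1}$ between the quotients $\widetilde{M_{i}}/G$, which are identified with $X \setminus L_{0}$ and $X \setminus L_{1}$. To extend $\iota$ across the link components, one works upstairs. The map $f$ is the restriction to the complement of an ambient isotopy of $\widetilde{X}$ carrying $\widetilde{L_{0}}$ to $\widetilde{L_{1}}$, so it extends to a pair homeomorphism $\widetilde{F} \colon (\widetilde{X}, \widetilde{L_{0}}) \to (\widetilde{X}, \widetilde{L_{1}})$ which is isotopic to $\mathrm{id}_{\widetilde{X}}$; in particular $f$ sends meridians of $\widetilde{L_{0}}$ to meridians of $\widetilde{L_{1}}$. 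Because $\tilde{\iota}$ is isotopic to $f$ on the complement, it also carries meridians to meridians up to isotopy on each boundary torus and therefore extends to a pair homeomorphism $\widetilde{H} \colon (\widetilde{X}, \widetilde{L_{0}}) \to (\widetilde{X}, \widetilde{L_{1}})$. The equivariance of $\tilde{\iota}$ propagates, after choosing extensions on orbit representatives and translating equivariantly by $G$, to an equivariant $\widetilde{H}$, and its descent yields the pair homeomorphism $H \colon (X, L_{0}) \to (X, L_{1})$.

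For the second assertion, I would extend the Mostow--Waldhausen isotopy from $\tilde{\iota}$ to $f$ across the tubular neighborhoods of $\widetilde{L_{0}}$ and $\widetilde{L_{1}}$ to obtain an isotopy from $\widetilde{H}$ to $\widetilde{F}$ as self-homeomorphisms of $\widetilde{X}$. Composing with the isotopy $\widetilde{F} \simeq \mathrm{id}_{\widetilde{X}}$ gives $\widetilde{H} \simeq \mathrm{id}_{\widetilde{X}}$. Since $\widetilde{H}$ is a lift of the descent $H \colon X \to X$, the supposed hypothesis applies and produces an isotopy from $H$ to $\mathrm{id}_{X}$. Evaluating this ambient isotopy on $L_{0}$ then gives the desired isotopy in $X$ from $L_{0}$ to $H(L_{0}) = L_{1}$.

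The main obstacle is the extension of isotopies across the tubular neighborhoods of the link components. The Mostow--Waldhausen isotopy is a priori only defined on the hyperbolic complement, and one has to organize it so that it preserves the boundary tori (for instance using horoball cross-sections at each cusp) and sends meridian slopes to meridian slopes throughout, so that it extends across each solid torus $D^{2} \times S^{1}$. This relies on the topological characterization of a meridian as the unique slope bounding a disk in the tubular neighborhood and on standard isotopy-extension arguments on $D^{2} \times S^{1}$; the remaining steps amount to equivariance bookkeeping.
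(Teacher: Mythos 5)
Your overall route is the expected one (and, as far as one can tell, the same as the cited source: the paper does not prove this lemma itself but imports it from [KMMY25, Lemma 4.2]): equivariance of $\tilde{\iota}$ gives a descended homeomorphism of complements, meridian-preservation lets you fill in the tubular neighborhoods to get a pair homeomorphism, and the lifting hypothesis upgrades the descended map to one isotopic to the identity, hence an ambient isotopy carrying $L_{0}$ to $L_{1}$. Two points need repair or more care. First, your construction of the equivariant extension $\widetilde{H}$ by ``choosing extensions on orbit representatives and translating equivariantly by $G$'' breaks down exactly when a component of $\pi^{-1}(N(L_{0}))$ has nontrivial stabilizer in $G$, i.e.\ when a component of $L_{0}$ has connected preimage; in the application of this paper ($G=\bbZ/2\bbZ$, components of $L$ representing the nontrivial element of $\pi_{1}(\bbR\bbP^{3})$) this is the generic situation, and for such a component an arbitrary extension need not commute with the stabilizer, so translating does not produce a well-defined equivariant map. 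The clean fix is to reverse your order of operations: since meridian disks upstairs map homeomorphically to meridian disks downstairs under the covering restricted to tubular neighborhoods, the descended map $\iota\colon M_{0}\to M_{1}$ already sends meridians to meridians (up to isotopy on each boundary torus), so extend $\iota$ over the solid tori \emph{downstairs} to get $H\colon (X,L_{0})\to(X,L_{1})$, and then lift $H$; the lift is automatically the desired equivariant extension of $\tilde{\iota}$.

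Second, the step ``extend the Mostow--Waldhausen isotopy across the tubular neighborhoods to obtain an isotopy from $\widetilde{H}$ to $\widetilde{F}$'' hides the real content: the boundary isotopy can be extended over the complementary solid tori by the isotopy-extension (fibration) property of the restriction map, but the resulting endpoint is \emph{some} extension of $f$, which a priori differs from $\widetilde{F}$ by mapping classes supported in the solid tori and fixing their boundaries; you need the standard uniqueness statement that homeomorphisms of a solid torus agreeing (up to isotopy) on the boundary are isotopic, equivalently that the collar twists of $S^{1}\times D^{2}$ are trivial rel boundary because both rotation loops of the boundary torus extend over the solid torus. With that fact quoted, your argument closes; alternatively, in the situation actually used in this paper one can shortcut the comparison with $\widetilde{F}$ entirely, since $\widetilde{X}=S^{3}$ and any orientation-preserving self-homeomorphism of $S^{3}$ is isotopic to the identity, so the lift $\widetilde{H}$ is isotopic to the identity as soon as its orientation behavior is checked. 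Apart from these two points, the argument is correct.
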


We apply Lemma~\ref{lem:action} to the case that $X = \bbR \bbP^{3}$. 
It is sufficient to show that the two $\bbZ / 2\bbZ$-actions coincide 
by the following fact shown in \cite{Bonahon83, HR85} (see also \cite{CM18}). 

\begin{lem}
\label{lem:rp3-mcg}
Any orientation-preserving self-homeomorphism of $\bbR \bbP^{3}$ is isotopic to the identity. 
Consequently, 
two links $L_{0}$ and $L_{1}$ in $\bbR \bbP^{3}$ are isotopic 
if and only if there is an orientation-preserving homeomorphism between $(\bbR \bbP^{3}, L_{0})$ and $(\bbR \bbP^{3}, L_{1})$. 
\end{lem}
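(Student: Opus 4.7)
The plan is to establish the first assertion and then immediately deduce the second. For the first, the most direct route is geometric. Equip $\bbR\bbP^{3}$ with its round spherical metric, so that its orientation-preserving isometry group is $SO(4)/\{\pm I\}$, which is path-connected. The generalized Smale conjecture for the spherical space form $\bbR\bbP^{3}$, established by Hatcher--Rubinstein \cite{HR85} and revisited via Ricci flow in \cite{CM18}, asserts that the inclusion $\Isom^{+}(\bbR\bbP^{3}) \hookrightarrow \Diff^{+}(\bbR\bbP^{3})$ is a homotopy equivalence. Hence $\Diff^{+}(\bbR\bbP^{3})$ is path-connected, and every orientation-preserving self-diffeomorphism of $\bbR\bbP^{3}$ is isotopic to the identity. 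Passing between homeomorphisms and diffeomorphisms is standard in dimension three via smoothing, so the assertion follows.

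A more elementary alternative proceeds through the genus-$1$ Heegaard splitting $\bbR\bbP^{3} = V_{0} \cup_{T} V_{1}$. Given an orientation-preserving homeomorphism $f$, Waldhausen's uniqueness of genus-$1$ Heegaard splittings up to isotopy lets me arrange that $f(T) = T$ and $f(V_{0}) \in \{V_{0}, V_{1}\}$. The induced class $[f|_{T}] \in \mathrm{MCG}^{+}(T) \cong SL(2,\bbZ)$ must carry the pair of meridian kernels in $H_{1}(T)$ into itself, and this constraint together with $\bbR\bbP^{3} = L(2,1)$ leaves only finitely many classes, each realized by an isometry lying in the identity component of $\Isom^{+}(\bbR\bbP^{3})$. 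Bonahon's analysis of self-homeomorphisms of Seifert fibered $3$-manifolds \cite{Bonahon83} packages this computation uniformly.

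For the second assertion, the ``only if'' direction is immediate, since an ambient isotopy of $\bbR\bbP^{3}$ carrying $L_{0}$ to $L_{1}$ restricts at time $1$ to an orientation-preserving homeomorphism of pairs. For the converse, suppose $\varphi \colon (\bbR\bbP^{3}, L_{0}) \to (\bbR\bbP^{3}, L_{1})$ is orientation-preserving. By the first assertion, there is an isotopy $\{\varphi_{t}\}_{t \in [0,1]}$ of $\bbR\bbP^{3}$ with $\varphi_{0} = \mathrm{id}$ and $\varphi_{1} = \varphi$; the family $\{\varphi_{t}(L_{0})\}_{t \in [0,1]}$ is then an ambient isotopy from $L_{0}$ to $L_{1}$.

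The principal obstacle is the first assertion, which is the non-trivial fact that $\mathrm{MCG}^{+}(\bbR\bbP^{3})$ is trivial. Invoking the resolved generalized Smale conjecture makes this a quick deduction, whereas the Heegaard-theoretic proof requires care in checking that every admissible class of $f|_{T}$ extends to a self-homeomorphism of $\bbR\bbP^{3}$ that is isotopic to the identity, not merely to some nontrivial isometry of the lens space.
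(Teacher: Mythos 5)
Your proposal is correct and takes essentially the same route as the paper, which offers no independent argument for this lemma: it simply cites \cite{Bonahon83, HR85} (see also \cite{CM18}) for the triviality of the orientation-preserving mapping class group of $\bbR \bbP^{3}$ and then makes exactly your deduction for links via an isotopy from the homeomorphism of pairs to the identity. One caution on attribution: the full generalized Smale conjecture for $\bbR \bbP^{3}$ (the homotopy equivalence $\Isom(\bbR \bbP^{3}) \hookrightarrow \Diff(\bbR \bbP^{3})$) is not proved in \cite{HR85} but is a much later Ricci-flow result of Bamler and Kleiner; however, only the $\pi_{0}$-level statement is needed here, and that is precisely what \cite{Bonahon83, HR85} establish, so this overclaim does not affect the validity of your argument.
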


The following elementary fact is crucial for the uniqueness of free 2-periodicities. 

\begin{lem}
\label{lem:invol}
A free isometric $\bbZ / 2\bbZ$-action on $S^{3}$ is unique. 
It is generated by the antipodal involution. 
\end{lem}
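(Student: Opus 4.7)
The plan is to translate the statement into elementary linear algebra on $O(4)$. An isometry of $S^{3}$ with the round metric extends uniquely to an orthogonal linear map of $\mathbb{R}^{4}$, so a free isometric $\mathbb{Z}/2\mathbb{Z}$-action corresponds to an element $\tau \in O(4)$ of order at most $2$ whose action on the unit sphere has no fixed point. The goal is then to identify $\tau$ with $-I$.

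First I would observe that, since $\tau^{2} = I$, the operator $\tau$ is diagonalizable over $\mathbb{R}$ with eigenvalues in $\{\pm 1\}$, and moreover we may choose an orthonormal basis diagonalizing $\tau$ because its $\pm 1$-eigenspaces are orthogonal under the standard inner product. Thus up to conjugation in $O(4)$ one has $\tau = \mathrm{diag}(\epsilon_{1}, \epsilon_{2}, \epsilon_{3}, \epsilon_{4})$ with $\epsilon_{j} \in \{\pm 1\}$. Next, I would note that a point $v \in S^{3}$ is fixed by $\tau$ if and only if $v$ lies in the $(+1)$-eigenspace of $\tau$. Freeness of the action is therefore equivalent to the $(+1)$-eigenspace containing no unit vector, which forces that eigenspace to be trivial. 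Consequently every $\epsilon_{j}$ equals $-1$, so $\tau = -I$ is the antipodal involution. Uniqueness follows immediately, since the above shows that every such $\tau$ equals $-I$ (no conjugation is even needed at the end).

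The main obstacle, to the extent there is one, is simply checking that the $\pm 1$-eigenspaces are orthogonal (so that $\tau$ can be put in the claimed diagonal form by an isometric change of basis) and confirming that the freeness condition is captured correctly by the vanishing of the $(+1)$-eigenspace. Both are routine but should be stated explicitly so that the conclusion "$\tau = -I$" rests on the full isometric, rather than merely topological, hypothesis, which is what allows us to invoke Theorem~\ref{thm:spherical} elsewhere and reduce the abstract classification of free $\mathbb{Z}/2\mathbb{Z}$-actions on $S^{3}$ to this one-line linear-algebra statement.
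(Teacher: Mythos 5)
Your proposal is correct and follows essentially the same route as the paper: identify the generator with an element of $\mathrm{O}(4)$, use freeness to rule out the eigenvalue $+1$, and conclude the map is $-I$. The only cosmetic difference is that you diagonalize over $\bbR$ using orthogonality of the eigenspaces, while the paper argues via the Jordan normal form and the fact that $-I$ is central; both are the same elementary linear-algebra argument.
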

\begin{proof}
Suppose that $A \in \mathrm{O}(4)$ acts on $S^{3}$ freely and $A^{2} = I$, 
where $I \in \mathrm{O}(4)$ is the identity matrix. 
Since $A^{2} = I$, an eigenvalue of $A$ is 1 or $-1$. 
Since $A$ acts on $S^{3}$ freely, 1 is not an eigenvalue of $A$. 
Since $-1$ is the only eigenvalue of $A$ and $A^{2} = I$, a Jordan normal form of $A$ is $-I$. 
Since $A$ is in the center of $\mathrm{GL}(4, \bbC)$, we have $A = -I$. 
\end{proof}

\begin{thm}
\label{thm:hyp-main}
Let $L_{0}$ and $L_{1}$ be hyperbolic links in $\bbR \bbP^{3}$. 
Suppose that the links $\widetilde{L_{0}} = \pi^{-1}(L_{0})$ and $\widetilde{L_{1}} = \pi^{-1}(L_{1})$ in $S^{3}$ are isotopic. 
Then $L_{0}$ and $L_{1}$ are isotopic. 
%Theorem~\ref{thm:main} holds for the hyperbolic links in $\bbR \bbP^{3}$. 
\end{thm}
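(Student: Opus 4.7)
The plan is to invoke Lemma~\ref{lem:action} with $X = \bbR\bbP^{3}$, $\widetilde{X} = S^{3}$, and $G = \bbZ/2\bbZ$, using the spherical space form conjecture (Theorem~\ref{thm:spherical}) and Lemma~\ref{lem:invol} to pin down the induced $G$-actions. Two items must be checked: (a) that the two $\bbZ/2\bbZ$-actions $\tilde{\iota}_{*}(\alpha_{0})$ and $\alpha_{1}$ on $\widetilde{M_{1}}$ coincide, and (b) that any self-homeomorphism of $\bbR\bbP^{3}$ whose lift to $S^{3}$ is isotopic to the identity is itself isotopic to the identity. Condition (b) is immediate from Lemma~\ref{lem:rp3-mcg}: such a lift is orientation-preserving, and since $\pi$ preserves orientation, so is the downstairs homeomorphism.

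The substance is (a). By the Mostow rigidity setup already in place, both $\tilde{\iota}_{*}(\alpha_{0})$ and $\alpha_{1}$ are free isometric involutions of the finite-volume hyperbolic manifold $\widetilde{M_{1}}$, hence lie in the finite group $\Isom(\widetilde{M_{1}})$. Isometries of a cusped hyperbolic 3-manifold extend smoothly across the cusps to self-diffeomorphisms of the Dehn filling, so each extends to a self-diffeomorphism of the pair $(S^{3}, \widetilde{L_{1}})$. Both extensions act freely on all of $S^{3}$: $\alpha_{1}$ extended is the deck transformation of $\pi$, and $\tilde{\iota}_{*}(\alpha_{0})$ extended is conjugate, via the extension of $\tilde{\iota}$, to the deck transformation on the $\widetilde{L_{0}}$-side, which is also free. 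I then let $H \subset \Diff(S^{3})$ denote the finite subgroup generated by these two extensions (still finite because both arise from $\Isom(\widetilde{M_{1}})$).

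At this point I apply Theorem~\ref{thm:spherical}: the smooth action of $H$ on $S^{3}$ is smoothly conjugate to an isometric action, so after conjugation I may regard $H \subset \mathrm{O}(4)$. The conjugates of $\tilde{\iota}_{*}(\alpha_{0})$ and $\alpha_{1}$ are then free isometric involutions of $S^{3}$, and by Lemma~\ref{lem:invol} each of them equals the antipodal map $-I$. Thus they coincide in $H$, and so coincide as actions on $\widetilde{M_{1}}$. Lemma~\ref{lem:action} then produces an isotopy between $L_{0}$ and $L_{1}$ in $\bbR\bbP^{3}$.

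The main obstacle I anticipate is the bookkeeping in step (a): ensuring that $\tilde{\iota}_{*}(\alpha_{0})$ and $\alpha_{1}$, a priori only isometries of the open hyperbolic manifold $\widetilde{M_{1}}$, assemble into a single smooth finite group acting freely on the closed manifold $S^{3}$ to which Theorem~\ref{thm:spherical} is applicable. Once the extension is secured and the freeness on the whole sphere is verified, the combination of Theorem~\ref{thm:spherical} with Lemma~\ref{lem:invol} makes the coincidence of the two involutions essentially automatic, and the rest follows from the machinery already recorded in the excerpt.
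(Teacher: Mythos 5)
Your proposal takes essentially the same route as the paper's proof: extend the two free involutions $\tilde{\iota}_{*}(\alpha_{0})$ and $\alpha_{1}$ to a smooth finite group action on $S^{3}$, make it isometric via Theorem~\ref{thm:spherical}, identify each involution with the antipodal map by Lemma~\ref{lem:invol}, and conclude with Lemmas~\ref{lem:action} and \ref{lem:rp3-mcg}. The one imprecision is your blanket claim that isometries of a cusped hyperbolic $3$-manifold extend across the Dehn filling — this requires that they send meridian slopes of $\widetilde{L_{1}}$ to meridian slopes, which is the point the paper makes explicit and which holds here because $\alpha_{1}$ is the restricted deck transformation and $\tilde{\iota}_{*}(\alpha_{0})$ is its conjugate by $\tilde{\iota}$, an isometry isotopic to a homeomorphism of the pair $(S^{3}, \widetilde{L_{1}})$; with that observation supplied, your argument is correct.
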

\begin{proof}
In the above setting, let $X = \bbR \bbP^{3}$ and $G = \bbZ / 2\bbZ$. 
We consider the two subgroups $G_{0} = \tilde{\iota}_{*}(\alpha_{0})(G)$ and $G_{1} = \alpha_{1}(G)$ of the finite group $\Isom(\widetilde{M_{1}})$. 
Let $\overline{G}$ denote the subgroup of $\Isom(\widetilde{M_{1}})$ generated by $G_{0}$ and $G_{1}$. 
Since the elements of $G_{0}$ and $G_{1}$ preserves the meridians of $\widetilde{L_{1}}$, 
so does the elements of $\overline{G}$. 
Hence the action of the finite group $\overline{G}$ on $\widetilde{M_{1}}$ extends to a smooth action on $S^{3}$. 
We may assume that this action is isometric by Theorem~\ref{thm:spherical}. 
Since $G_{0}$ and $G_{1}$ acts freely and isometrically, 
the subgroups $G_{0}$ and $G_{1}$ coincide by Lemma~\ref{lem:invol}. 
Lemmas~\ref{lem:action} and \ref{lem:rp3-mcg} then imply that $L_{0}$ and $L_{1}$ are isotopic. 
\end{proof}

\begin{rem}
In the above proof, we obtain that the finite group $\overline{G}$ is actually $\bbZ / 2\bbZ$ 
after we obtain a (not necessarily free) isometric action of $\overline{G}$ on $S^{3}$. 
Hence we need the resolution of the spherical space form conjecture in full generality, 
as well as the case of a free involution shown by Livesay~\cite{Livesay60}. 
\end{rem}

\section{Links whose complements are Seifert fibered}
\label{section:seifert}

We classify the links in $\bbR \bbP^{3}$ whose complements are Seifert fibered. 
After that, we consider their preimages in $S^{3}$. 
The links in $S^{3}$ whose complements are Seifert fibered were classified in \cite{Budney06}.

We first prepare notations for generalization of the torus links. 
Fix genus-1 Heegaard splittings of $S^{3}$ and $\bbR \bbP^{3}$ obtained by gluing two solid tori $H_{0}$ and $H_{1}$ along the boundary $\Sigma = \partial H_{0} = \partial H_{1}$. 
Fix an oriented logitude $\lambda$ and meridian $\mu$ on $\Sigma$ as follows: 
$\mu$ bounds a disk in $H_{0}$ for $S^{3}$ and $\bbR \bbP^{3}$, 
$\lambda$ bounds a disk in $H_{1}$ for $S^{3}$, and 
$2 \lambda + \mu$ bounds a disk in $H_{1}$ for $\bbR \bbP^{3}$. 
We denote the corresponding generators of $H_{1}(\Sigma, \bbZ) \cong \bbZ^{2}$ also by $\lambda$ and $\mu$. 

For $(p, q) \in \bbZ^{2}$ and $n \in \{ 0,1,2 \}$, 
we define the links $T_{S^{3}}(p, q; n) \subset S^{3}$ and $T_{\bbR \bbP^{3}}(p, q; n) \subset \bbR \bbP^{3}$ as follows. 
If $(p, q) \neq (0,0)$, 
let $d = \gcd(p,q)$, $p' = p/d$, $q' = q/d$, 
and let $T_{*}(p, q; 0)$ for $* = S^{3}$ or $\bbR \bbP^{3}$ denote the union of $d$ parallel copies of the simple closed curve on $\Sigma$ 
representing $p' \lambda + q' \mu \in H_{1}(\Sigma, \bbZ)$. 
Conventionally, let $T_{*}(0, 0; 0)$ denote the empty set. 
Let $T_{*}(p, q; 1)$ denote the union of $T_{*}(p, q; 0)$ and the core of $H_{0}$. 
Let $T_{*}(p, q; 2)$ denote the union of $T_{*}(p, q; 0)$ and the cores of $H_{0}$ and $H_{1}$. 
Note that $T_{S^{3}}(p, q; 0)$ is the $(p,q)$-torus link.

The triple $(p, q, n)$ is not unique for the isotopy class of $T_{S^{3}}(p, q; n)$. 
Indeed, the following relations hold, 
where $L_{0} \cong L_{1}$ indicates that they are isotopic. 
\begin{enumerate}
\item $T_{S^{3}}(p, q; n) = T_{S^{3}}(-p, -q; n)$. 
\item $T_{S^{3}}(p, q; n) \cong T_{S^{3}}(q, p; n)$ for $n=0,2$. 
\item If $p > 0$ divides $q$, then $T_{S^{3}}(p, q; 0) \cong T_{S^{3}}(p-1, (p-1)q/p; 1)$. 
\item If $q > 0$ divides $p$, then $T_{S^{3}}(p, q; 1) \cong T_{S^{3}}((q-1)p/q, q-1; 2)$. 
\end{enumerate}
The relation (1) holds since the links are not oriented. 
The relation (2) holds by exchanging $H_{0}$ and $H_{1}$. 
The relations (3) and (4) hold by isotoping a component to the core of $H_{0}$ or $H_{1}$. 
Note that $T_{S^{3}}(1, q; 0) \cong T_{S^{3}}(0, 0; 1)$ is the unknot, 
and $T_{S^{3}}(2, 2; 0) \cong T_{S^{3}}(2, -2; 0) \cong T_{S^{3}}(1, 1; 1) \cong T_{S^{3}}(1, -1; 1) \cong T_{S^{3}}(0, 0; 2)$ is the Hopf link. 
In fact, these relations are sufficient to consider the isotopy classes. 

\begin{prop}[\cite{Budney06} Proposition 3.5]
\label{prop:torus-link}
If two links $T_{S^{3}}(p_{0}, q_{0}; n_{0})$ and $T_{S^{3}}(p_{1}, q_{1}; n_{1})$ are isotopic, 
they are related by combining the relations $(1)$--$(4)$. 
\end{prop}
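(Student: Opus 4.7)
The plan is to extract the triple $(p, q, n)$, up to the relations (1)--(4), from the Seifert fibered structure of the complement of $T_{S^{3}}(p, q; n)$.

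First, I would identify each $T_{S^{3}}(p, q; n)$ as a union of fibers in a Seifert fibration of $S^{3}$. With $d = \gcd(p, q)$, $p' = p/d$, $q' = q/d$, the Heegaard torus $\Sigma$ is foliated by parallel simple closed curves of slope $p' \lambda + q' \mu$, and this foliation extends to a Seifert fibration of $S^{3}$ over $S^{2}$ in which the cores of $H_{0}$ and $H_{1}$ are exceptional fibers of orders $|q'|$ and $|p'|$ (when these exceed $1$). The link $T_{S^{3}}(p, q; n)$ then consists of $d$ generic fibers on $\Sigma$ together with $n$ of these two exceptional fibers, so its complement $M$ is Seifert fibered over a planar orbifold with $d + n$ boundary circles and $2 - n$ cone points.

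Second, I would invoke the uniqueness of Seifert fibrations on irreducible, sufficiently large 3-manifolds due to Waldhausen: unless $M$ is a solid torus, $T^{2} \times I$, or one of a short list of small exceptional Seifert manifolds, any two Seifert fibrations of $M$ are ambient isotopic. An isotopy between two of our links induces an orientation-preserving homeomorphism of complements, which is therefore isotopic to a fiber-preserving one. The induced map of base orbifolds must preserve the partition of boundary circles into those bounding meridian disks of generic fibers versus those bounding meridian disks of exceptional fibers, which recovers the invariants $d$, $n$, and the unordered pair $\{ |p'|, |q'| \}$. Extending across the removed fibers yields an orientation-preserving self-homeomorphism of $S^{3}$ taking one link to the other; its effect on the Heegaard splitting (possibly swapping $H_{0}$ and $H_{1}$) gives the triple $(p, q, n)$ modulo relations (1) and (2).

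Third, I would treat the degenerate cases in which the Seifert fibration of $M$ is not unique. These occur precisely when $M$ is a solid torus or $T^{2} \times I$, i.e., when the link is an unknot or a Hopf link, possibly with one or two core components. The multiple Seifert fibrations here correspond to different presentations of the same link: pushing one parallel copy of a $(1, k)$-fiber off $\Sigma$ into the interior of $H_{0}$ or $H_{1}$, where it becomes isotopic to the core. A direct check shows that these ambiguities match exactly relations (3) and (4) (the divisibility hypothesis $p \mid q$ or $q \mid p$ being precisely what forces $p' = \pm 1$ or $q' = \pm 1$, so that a generic fiber can be isotoped onto a core).

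The main obstacle will be this final step: verifying by explicit case analysis that every Seifert-fibration ambiguity arising among our links is captured by (3) and (4) combined with (1)--(2), and reconciling the small subcases --- the empty link, single unknots, and the Hopf link in the four presentations listed in the excerpt. Some additional care is required to keep track of the orientation of the Seifert fibration so that the sign relation (1) absorbs the remaining freedom.
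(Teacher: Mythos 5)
This statement is quoted from Budney and the paper gives no proof of it, so your sketch can only be judged on its own terms; as written it has a genuine gap in how it locates the source of the relations $(3)$ and $(4)$. You organize the case analysis around whether the complement $M$ has a unique Seifert fibration, claiming that in the generic case the fiber-preserving homeomorphism recovers $d$ and $n$, and that the remaining ambiguities occur ``precisely when $M$ is a solid torus or $T^{2}\times I$, i.e.\ when the link is an unknot or a Hopf link.'' That dichotomy is wrong. Take $T_{S^{3}}(2,4;0)\cong T_{S^{3}}(1,2;1)$ (relation $(3)$ with $p=2$, $q=4$), or more generally $T_{S^{3}}(p,pk;0)\cong T_{S^{3}}(p-1,(p-1)k;1)$: these are isotopic links with $n_{0}\neq n_{1}$ and $d_{0}\neq d_{1}$, they are neither unknots nor Hopf links, and their common complement (Seifert fibered over an annulus, resp.\ a planar surface, with one cone point of order $2$, base of negative orbifold Euler characteristic) has a \emph{unique} Seifert fibration. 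So your step two, which would conclude $d_{0}=d_{1}$ and $n_{0}=n_{1}$ whenever $M$ is not a solid torus or $T^{2}\times I$, proves something false. The actual phenomenon behind $(3)$ and $(4)$ is the one you mention only parenthetically: when $p'=\pm 1$ or $q'=\pm 1$ the corresponding core of $H_{0}$ or $H_{1}$ is a \emph{regular} fiber, so the fibration of the complement cannot distinguish that core component from the parallel copies on $\Sigma$; only $d+n$ and the set of exceptional-fiber components are invariants, not $d$ and $n$ separately. The case analysis must therefore be organized around whether a core is a regular fiber, not around uniqueness of the fibration of $M$; the solid-torus and $T^{2}\times I$ complements are an additional, separate list of degenerate cases.

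A second, smaller gap is in your first step: not every $T_{S^{3}}(p,q;n)$ is a union of fibers of a Seifert fibration of $S^{3}$. For the families $T_{S^{3}}(0,q;n)$ the slope is the meridian of $H_{0}$, which can never be a fiber of a global fibration (Lemma~\ref{lem:extension} fails there): $T_{S^{3}}(0,q;0)$ is a $q$-component unlink with reducible complement, and the key-chain links $T_{S^{3}}(0,q;1)\cong T_{S^{3}}(0,q-1;2)$ have complement $D_{q}\times S^{1}$, again with a unique fibration but with two distinct presentations related by $(4)$. These need separate treatment (component counts for unlinks; an analysis of which boundary torus of $D_{q}\times S^{1}$ plays the role of the core) that your sketch does not supply. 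The overall strategy --- uniqueness of Seifert fibrations plus uniqueness of the genus-one Heegaard torus, with orientation bookkeeping absorbing $(1)$ and $(2)$ --- is the right one and is essentially Budney's, but the proof as structured would fail at the two points above.
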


Similarly, the following relations for the links $T_{\bbR \bbP^{3}}(p, q; n)$ hold. 
\begin{enumerate}
\item $T_{\bbR \bbP^{3}}(p, q; n) = T_{\bbR \bbP^{3}}(-p, -q; n)$. 
\item $T_{\bbR \bbP^{3}}(p, q; n) \cong T_{\bbR \bbP^{3}}(-p+2q, q; n)$ for $n=0,2$. 
\item If $p > 0$ divides $q$, then $T_{\bbR \bbP^{3}}(p, q; 0) \cong T_{\bbR \bbP^{3}}(p-1, (p-1)q/p; 1)$. 
\item If $-p+2q > 0$ divides $q$, then $T_{\bbR \bbP^{3}}(p, q; 1) \cong T_{\bbR \bbP^{3}}((-p+2q-1)p/(-p+2q), (-p+2q-1)q/(-p+2q); 2)$. 
\end{enumerate}
To obtain the relation (2), 
we exchange $H_{0}$ and $H_{1}$. 
Then the elements $\mu$ and $2 \lambda + \mu$ are exchanged by a linear involution on $H_{1}(\Sigma, \bbZ) \cong \bbZ^{2}$. 
The equations
$\begin{pmatrix}
a & b \\
c & d 
\end{pmatrix}
\begin{pmatrix}
0 \\
1 
\end{pmatrix}
= 
\begin{pmatrix}
2 \\
1 
\end{pmatrix}$
and 
$\begin{pmatrix}
a & b \\
c & d 
\end{pmatrix}
\begin{pmatrix}
2 \\
1 
\end{pmatrix}
= 
\begin{pmatrix}
0 \\
1 
\end{pmatrix}$ 
imply 
$\begin{pmatrix}
a & b \\
c & d 
\end{pmatrix}
=
\begin{pmatrix}
-1 & 2 \\
0 & 1 
\end{pmatrix}$. 
Hence the elements $p \lambda + q \mu$ and $(-p+2q) \lambda + q \mu$ are exchanged. 
We will show that the relations (1)--(4) are sufficient in Corollary~\ref{cor:rp3-torus-link}.

We next prepare notations for Seifert fibrations. 
For coprime integers $a>0$ and $b$, 
the \emph{$(a,b)$-fibered solid torus} is obtained from $D^{2} \times [0,1]$ by identifying $D^{2} \times \{ 0 \}$ and $D^{2} \times \{ 1 \}$ through the $2\pi b/a$-rotation, 
and its fibers are obtained from the fibers $\{ z \} \times [0,1]$ of $D^{2} \times [0,1]$. 
A $(1,0)$-fibered solid torus is also called a \emph{product fibered solid torus}. 
A \emph{Seifert fibration} of a 3-manifold is a decomposition into a disjoint union of circles (called \emph{fibers}) such that each fiber has a tubular neighborhood that forms a fibered solid torus. 
A fiber is called \emph{regular} if it has a tubular neighborhood that forms a product fibered solid torus. 
A fiber is called \emph{$(a,b)$-singular} if it has a tubular neighborhood that forms an $(a,b)$-fibered solid torus for $a \geq 2$. 
The following fact is well known. 

\begin{lem}
\label{lem:extension}
Let $M$ be a Seifert fibered 3-manifold with boundary consisting of tori. 
Let $T$ be a component of the boundary of $M$, which is a union of fibers. 
Suppose that a 3-manifold $M'$ is obtained by attaching a solid torus to $M$ along $T$. 
Then the Seifert fibration of $M$ is uniquely extended to a Seifert fibration of $M'$ by a fibered solid torus 
unless a meridian (i.e., a simple closed curve on $T$ bounding a disk in the solid torus) is isotopic to a fiber in $T$. 
\end{lem}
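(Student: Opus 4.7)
The plan is to reduce the lemma to a standard computation identifying which slopes on the boundary of a solid torus arise as regular-fiber slopes of a Seifert fibration. First I would fix coordinates on $T$: choose a meridian $m$ of the attached solid torus $V$ (a simple closed curve on $T$ bounding a disk in $V$) and a longitude $\ell$ with $m \cdot \ell = \pm 1$, so that $(m, \ell)$ is a basis of $H_{1}(T; \bbZ)$. Let $\phi \in H_{1}(T; \bbZ)$ denote the class of a regular fiber of the Seifert fibration of $M$ on $T$; since a fiber is a simple closed curve, $\phi$ is primitive, so I can write $\phi = b m + a \ell$ for coprime integers $a, b$. The hypothesis that no meridian of $V$ is isotopic to a fiber on $T$ translates to $a \neq 0$, and after reversing an orientation I may assume $a \geq 1$.

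For existence, I would realize $V$ as the standard $(a, b)$-fibered solid torus $D^{2} \times [0,1] / {\sim}$ in such a way that the boundary fibers match $\phi$. A direct computation in the universal cover of the boundary torus shows that a meridian $m_{0}$ of the model is nullhomologous in the solid torus, the core fiber generates $H_{1}$, and each regular fiber on the boundary represents $a \ell_{0} + b m_{0}$ in the natural basis $(m_{0}, \ell_{0})$ of the boundary; in particular $a$ is the multiplicity of the core. Hence a diffeomorphism $V \to D^{2} \times [0,1] / {\sim}$ that carries the basis $(m, \ell)$ to $(m_{0}, \ell_{0})$ exists and sends boundary fibers to fibers. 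Gluing along $T$, the Seifert fibrations of $M$ and of this model solid torus combine to a Seifert fibration of $M'$ extending that of $M$.

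For uniqueness, note that any Seifert fibration of $V$ extending the given fibration of $M$ must have boundary fiber class $\phi$. By the existence computation above, such a fibration makes $V$ a fibered solid torus whose invariants $a$ and $b \bmod a$ are recovered from $\phi$ alone. Hence any two extensions are fiber-preservingly diffeomorphic, so the extension is unique. The main obstacle is purely bookkeeping: one has to verify that the change-of-basis matrix between the coordinates on $T$ inherited from $M$ and those coming from the standard fibered solid torus lies in $\mathrm{SL}(2, \bbZ)$ and induces an honest fiber-preserving diffeomorphism of $V$. No deeper geometric input is needed, as the lemma is essentially the classification of fibered neighborhoods of boundary tori in Seifert-fibered spaces.
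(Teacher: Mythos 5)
Your proof is correct: the identification of the fiber slope $\phi = bm + a\ell$, the condition $a \neq 0$ as the non-meridional hypothesis, the filling by the standard $(a,b)$-fibered solid torus, and the recovery of $a$ and $b \bmod a$ from $\phi$ for uniqueness are exactly the standard argument. The paper states this lemma without proof as a well-known fact, so there is no authorial proof to compare against; your write-up supplies precisely the argument it implicitly relies on.
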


The Seifert fibrations are classified (see \cite{Hatcher07} for details). 
Every Seifert fibration of $S^{3}$ or $\bbR \bbP^{3}$ is obtained by gluing two fibered solid tori, 
and so it is compatible with the genus-1 Heegaard splitting.

We now consider the links $T_{*}(p, q; n)$ in relation to Seifert fibrations. 
For $(p, q) \in \bbZ^{2} \setminus \{ (0,0) \}$, 
take a Seifert fibration of a tubular neighborhood of the Heegaard torus $\Sigma$ 
so that a link $T_{*}(p, q; 0)$ consists of fibers. 
This Seifert fibration has no singular fibers. 
By Lemma~\ref{lem:extension}, this can be extended to a Seifert fibration of $S^{3}$ or $\bbR \bbP^{3}$ 
if the slope $p' \lambda + q' \mu$ is neither the meridian of $H_{0}$ nor $H_{1}$. 
Then the cores of $H_{0}$ and $H_{1}$ are (in general singular) fibers. 
Hence a link $T_{*}(p, q; n)$ consists of fibers of a Seifert fibration 
unless it is isotopic to $T_{*}(0, q; n)$ or $T_{\bbR \bbP^{3}}(2q, q; 1)$. 
Note that $T_{S^{3}}(p, 0; 1) \cong T_{S^{3}}(p+1, 0; 0) \cong T_{S^{3}}(0, p+1; 0)$ for $p \geq 0$. 
The knot $T_{S^{3}}(0, 0; 1) \cong T_{S^{3}}(0, 1; 0)$ is the unknot, and so its complement is a solid torus. 
The complement of the knot $T_{\bbR \bbP^{3}}(0, 0; 1) \cong T_{\bbR \bbP^{3}}(1, q; 0)$ is also a solid torus. 
The other links $T_{*}(0, q; 0)$ ($q \geq 2$) and $T_{\bbR \bbP^{3}}(2q, q; 1)$ ($q \geq 1$) are not irreducible, 
and so they are not Seifert fibered. 
Note that a Seifert fibered 3-manifold is irreducible 
unless it is $S^{1} \times S^{2}$, the twisted $S^{2}$-bundle over $S^{1}$, or $\bbR \bbP^{3} \# \bbR \bbP^{3}$ \cite[Proposition 1.12]{Hatcher07}. 
The link $T_{S^{3}}(0, q; 1) \cong T_{S^{3}}(0, q-1; 2)$ for $q \geq 2$ is called a ``key-chain link'' in \cite{Budney06}. 
The following homeomorphisms hold. 
\begin{itemize}
\item The complement of $T_{S^{3}}(0, q; 1)$ is homeomorphic to the complement of $q$ regular fibers in the product fibered solid torus, 
which is the product of the $q$-punctured disk and the circle. 
\item The complement of $T_{\bbR \bbP^{3}}(0, q; 1)$ is homeomorphic to the complement of $q$ regular fibers in the $(2,1)$-fibered solid torus. 
\item The complement of $T_{\bbR \bbP^{3}}(0, q; 2)$ is homeomorphic to the complement of $q+1$ regular fibers in the product fibered solid torus, 
which is the product of the $(q+1)$-punctured disk and the circle. 
\end{itemize}
These are Seifert fibered. 
Hence the complement of $T_{*}(p, q; n)$ is Seifert fibered 
unless it is isotopic to $T_{*}(0, q; 0)$ ($q \geq 2$) or $T_{\bbR \bbP^{3}}(2q, q; 1)$ ($q \geq 1$). 
The converse is also true. 

\begin{prop}[\cite{Budney06} Proposition 3.3]
\label{prop:s3-seifert}
Let $L$ be a link in $S^{3}$. 
Suppose that the complement of $L$ is Seifert fibered. 
Then $L$ is isotopic to a link $T_{S^{3}}(p, q; n)$. 
\end{prop}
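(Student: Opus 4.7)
The plan is to fix a Seifert fibration $\mathcal{F}$ of the exterior $M$ of $L$ in $S^{3}$, and, using Lemma~\ref{lem:extension}, attempt to extend $\mathcal{F}$ across a tubular neighborhood of each component $K_{j}$ of $L$ by a fibered solid torus. The argument naturally splits according to whether this extension succeeds everywhere, and in each case $L$ will be exhibited directly in the form $T_{S^{3}}(p, q; n)$.

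First I would handle the case where on every boundary torus $T_{j} \subset \partial M$ the meridian $m_{j}$ is not isotopic to a regular fiber of $\mathcal{F}|_{T_{j}}$. Then Lemma~\ref{lem:extension} extends $\mathcal{F}$ uniquely across each of the removed solid tori, producing a Seifert fibration of all of $S^{3}$ in which every component of $L$ is a fiber (possibly singular). As the paper notes, any Seifert fibration of $S^{3}$ is compatible with the genus-1 Heegaard splitting and has at most two singular fibers, so after an ambient isotopy I may take the singular fibers to be the cores of $H_{0}$ and $H_{1}$, and the regular fibers on an intermediate level Heegaard torus to be $d$ parallel copies of a $(p,q)$-torus knot. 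Writing $L$ as the union of its regular fibers together with whichever subset of the two singular cores lie in $L$, one obtains exactly $L \cong T_{S^{3}}(p, q; n)$ for some $n \in \{0, 1, 2\}$.

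Next I would treat the case where on some $T_{j}$ the meridian $m_{j}$ coincides with a fiber of $\mathcal{F}$. Filling that solid torus pinches all fibers parallel to $m_{j}$ and kills the regular-fiber homotopy class of $\mathcal{F}$. Combined with $\pi_{1}(S^{3}) = 1$, this forces the base 2-orbifold of $\mathcal{F}$ to be a multi-punctured disk with no cone points, and hence $M$ to be homeomorphic to the product of a $q$-punctured disk with $S^{1}$. This is precisely the complement of a key-chain link as described in the paragraph preceding the proposition, so matching Seifert invariants identifies $L$ with $T_{S^{3}}(0, q-1; 1) \cong T_{S^{3}}(0, q-2; 2)$, again of the required form.

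The main obstacle is the second case, because there the Seifert fibration of $M$ cannot be used directly to ambient-isotope $L$ into standard position; instead $L$ must be reconstructed from its complement together with the meridian slopes. The leverage is that killing the fiber class drastically restricts the base orbifold once the filled manifold is required to be $S^{3}$, reducing the classification to the key-chain family. Low-complexity degenerate cases (empty $L$, unknot, Hopf link) can be verified directly and illustrate how the relations (1)--(4) preceding the proposition absorb the special triples with $pq = 0$.
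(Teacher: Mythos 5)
Your two-case structure (extend the Seifert fibration by Lemma~\ref{lem:extension} across every component whose meridian is not a fiber, and treat separately the case where some meridian is a fiber) is the same strategy the paper uses for the $\bbR\bbP^{3}$ analogue, Proposition~\ref{prop:rp3-seifert}; note the paper does not reprove the $S^{3}$ statement itself but cites Budney. Your Case 1 is essentially correct and matches that template. The genuine gap is in Case 2, in two places. First, the $\pi_{1}$ argument must be pushed further than "the base is a multi-punctured disk with no cone points''. Writing each remaining meridian as $a_{j}c_{j}+b_{j}h$ in terms of the boundary curve $c_{j}$ of the base orbifold and the fiber class $h$, killing all meridians in $\pi_{1}(M)$ leaves a free product of the form $F_{2g} \ast (\ast_{i}\,\bbZ/\alpha_{i}) \ast (\ast_{j}\,\bbZ/|a_{j}|)$, and its triviality forces not only genus zero and no cone points but also that $|a_{j}|=1$ for every other component and that at most one meridian equals the fiber. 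You never state or use these last conclusions, yet they are exactly what guarantee that refilling the other components produces no exceptional fibers, i.e.\ that those components are regular fibers.

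Second, and more seriously, the step ``$M$ is homeomorphic to $(q\text{-punctured disk})\times S^{1}$, which is the key-chain complement, so matching Seifert invariants identifies $L$'' is not a valid inference: a link in $S^{3}$ is not determined by the homeomorphism type (or the Seifert invariants) of its complement. Concretely, for $n\neq 0$ the link $T_{S^{3}}(q, nq; 1)$, i.e.\ the $(q,nq)$-torus link together with the core of $H_{0}$, also has complement homeomorphic to the product of a $q$-punctured disk with $S^{1}$, but it is not a key-chain link; it simply belongs to your Case 1, since for its fibration no meridian is a fiber. So in Case 2 you must reconstruct the pair $(S^{3},L)$ from the complement together with the meridian slopes: fill in all components other than the distinguished $K_{0}$ (using $|a_{j}|=1$), conclude that the complement of $K_{0}$ becomes a solid torus carrying a fibration without singular fibers whose boundary fiber slope is the meridian of $K_{0}$, hence $K_{0}$ is unknotted and the remaining components are fibers of that fibration, and then argue that such a configuration is standard up to isotopy, giving $L\cong T_{S^{3}}(0,q;1)$. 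This reconstruction, which the paper carries out in the $\bbR\bbP^{3}$ case by an embedded-sphere and irreducibility argument rather than by $\pi_{1}$, is precisely the content missing from your ``main obstacle'' paragraph.
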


\begin{prop}
\label{prop:rp3-seifert}
Let $L$ be a link in $\bbR \bbP^{3}$. 
Suppose that the complement of $L$ is Seifert fibered. 
Then $L$ is isotopic to a link $T_{\bbR \bbP^{3}}(p, q; n)$. 
\end{prop}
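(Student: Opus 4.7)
The plan is to follow Budney's strategy from the proof of the $S^{3}$-analog (Proposition~\ref{prop:s3-seifert}): start with a Seifert fibration on the complement $M = \bbR \bbP^{3} \setminus \inter(N(L))$ and attempt to extend it across each filling solid torus via Lemma~\ref{lem:extension}. If the extension succeeds everywhere, the resulting Seifert fibration of $\bbR \bbP^{3}$ exhibits $L$ as a union of fibers, and the known classification of Seifert fibrations of $\bbR \bbP^{3}$ identifies $L$ with some $T_{\bbR \bbP^{3}}(p, q; n)$.

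Concretely, I would first apply Lemma~\ref{lem:extension} to each neighborhood $N(K_{i})$. The extension succeeds unless the meridian of $K_{i}$ is isotopic to a fiber of $M$ on $\partial N(K_{i})$. Assume for the moment that the extension succeeds on every component. We then obtain a Seifert fibration of $\bbR \bbP^{3}$ in which $L$ consists entirely of fibers (regular or singular). Every Seifert fibration of $\bbR \bbP^{3}$ is, up to isotopy, compatible with the genus-$1$ Heegaard splitting fixed in Section~\ref{section:seifert}: the at most two singular fibers lie at the cores of $H_{0}$ and $H_{1}$, and the regular fibers on $\Sigma$ are parallel simple closed curves of slope $p' \lambda + q' \mu$ for some coprime pair $(p', q')$. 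Letting $d$ be the number of regular-fiber components of $L$ on $\Sigma$ and $n \in \{ 0, 1, 2 \}$ the number of Heegaard cores contained in $L$, we conclude that $L$ is isotopic to $T_{\bbR \bbP^{3}}(p, q; n)$ with $p = dp'$ and $q = dq'$.

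The main obstacle is the obstructed case, in which the meridian of some $K_{i}$ is a fiber of the chosen Seifert fibration of $M$. Here I would argue that the Seifert fibration of $M$ can be re-chosen so that the obstruction is removed: locally, the union of a fibered neighborhood of the obstructing meridian with $N(K_{i})$ forms a solid torus admitting an alternative Seifert fibration in which $K_{i}$ itself becomes a (possibly singular) fiber, and swapping to this alternative yields a Seifert fibration of $M$ that extends across $K_{i}$. After finitely many such swaps, all obstructions are removed and we reduce to the favorable case above. Verifying that the local re-fiberings can be assembled into a single global Seifert fibration of $M$ --- using the irreducibility of $\bbR \bbP^{3}$ (together with Proposition 1.12 of Hatcher, invoked just before the statement) to rule out configurations that persist under every choice of fibration --- is the essential technical step.
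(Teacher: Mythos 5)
Your favorable case (the fibration of the complement extends over every $N(K_{i})$, so $L$ becomes a union of fibers of a Seifert fibration of $\bbR\bbP^{3}$ and is then read off from the Heegaard splitting) matches the first half of the paper's proof. The gap is in your treatment of the obstructed case. Your claim that the obstruction can always be removed by re-fibering --- ``after finitely many such swaps, all obstructions are removed and we reduce to the favorable case'' --- is false, and no amount of care with Hatcher's Proposition 1.12 will rescue it, because there genuinely exist links whose complements are Seifert fibered but which are \emph{not} unions of fibers of any Seifert fibration of the ambient space. The key-chain-type links $T_{\bbR\bbP^{3}}(0,q;1)$ and $T_{\bbR\bbP^{3}}(0,q;2)$ (and $T_{S^{3}}(0,q;1)$ in Budney's setting) are exactly such links: their complements are circle bundles over punctured disks (or the $(2,1)$-fibered analogue), hence Seifert fibered, yet for $q\geq 2$ two of the ``key'' components have linking-number-zero-type behaviour incompatible with being fibers of a single ambient fibration, whose fibers pairwise link nontrivially. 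Moreover, for these complements the Seifert fibration is unique up to isotopy (the base has negative Euler characteristic), so there is no ``alternative fibration'' to swap to: the meridian of the key-ring component is a fiber in every fibration of the complement, and Lemma~\ref{lem:extension} simply never applies to that component.

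What is needed instead --- and what the paper does --- is to treat the obstructed case head-on: if some meridian is a fiber, analyze the base orbifold $\mathcal{S}$ of the fibration of the complement. Using the facts that $\bbR\bbP^{3}$ contains no embedded Klein bottles and no non-separating tori, one shows $\mathcal{S}$ is orientable of genus zero; then, by capping vertical annuli over arcs in $\mathcal{S}$ with meridian disks of $N(K_{0})$ to produce spheres and invoking irreducibility of $\bbR\bbP^{3}$, one shows there is at most one singular fiber, necessarily of type $(2,1)$, and that every other component's meridian meets a regular fiber once (or, in the no-singular-fiber case, exactly one component's meridian meets it twice). This identifies $L$ directly as $T_{\bbR\bbP^{3}}(0,q;1)$ or $T_{\bbR\bbP^{3}}(0,q;2)$ without ever realizing it as a union of fibers. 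So your outline is fine up to the dichotomy, but the second branch requires this direct classification rather than a reduction to the first branch.
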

\begin{proof}
We first suppose that $L$ consists of fibers of a Seifert fibration of $\bbR \bbP^{3}$. 
A Seifert fibration of $\bbR \bbP^{3}$ is obtained by gluing two fibered solid tori. 
These solid tori induce a Heegaard splitting of $\bbR \bbP^{3}$. 
The singular fibers are the cores of the solid tori. 
The components of $L$ which are regular fibers can be isotoped into the Heegaard torus. 
Hence $L$ is isotopic to a link $T_{\bbR \bbP^{3}}(p, q; n)$. 

We next suppose that $L$ does not consist of fibers of any Seifert fibration of $\bbR \bbP^{3}$. 
Fix a Seifert fibration $\mathcal{F}$ on the complement of $L$. 
Then Lemma~\ref{lem:extension} implies that 
there is a component $K_{0}$ of $L$ whose meridian is a fiber. 
Let $\mathcal{S}$ denote the base orbifold of the Seifert fibration $\mathcal{F}$. 
Consider a surface in the complement of $L$ that is the preimage of an embedded circle on $\mathcal{S}$. 
Due to Bredon and Wood \cite{BW69} (see also \cite{GT22}), 
there exist no embedded Klein bottles in $\bbR \bbP^{3}$. 
Since there exist no surjections from $\pi_{1}(\bbR \bbP^{3}) \cong \bbZ / 2\bbZ$ to $\pi_{1}(S^{1}) \cong \bbZ$, 
there exist no non-separating tori in $\bbR \bbP^{3}$. 
Hence the orbifold $\mathcal{S}$ is orientable and has zero genus. 

Suppose that there is a singular fiber of $\mathcal{F}$. 
Let $\partial_{0} \mathcal{S}$ denote the boundary component of $\mathcal{S}$ that is the image of the boundary of the tubular neighborhood $N(K_{0})$ of $K_{0}$. 
Let $\alpha$ be an embedded arc in $\mathcal{S}$ that joins a singular point of $\mathcal{S}$ and a point in $\partial_{0} \mathcal{S}$. 
Let $\beta$ be an embedded arc in $\mathcal{S}$ that joins two points in $\partial_{0} \mathcal{S}$
and is contained in the boundary of a tubular neighborhood of $\alpha$. 
Consider a sphere in $\bbR \bbP^{3}$ that is a union of the preimage of $\beta$ and two disks in $N(K_{0})$. 
If this sphere surrounds an $(a,b)$-singular fiber, 
it induces a connected sum decomposition of $\bbR \bbP^{3}$ one of whose summands is a lens space $L(a,b')$. 
Since $\bbR \bbP^{3} = L(2,1)$ is irreducible, 
the Seifert fibration $\mathcal{F}$ has at most one singular fiber, 
which is a $(2,1)$-singular fiber. 

Let $K_{1}, \dots, K_{m}$ denote the components of $L$ other than $K_{0}$. 
Consider $m$ embedded arcs in $\mathcal{S}$ around the corresponding boundary components that join two points in $\partial_{0} \mathcal{S}$. 
By the same argument as above, we can determine the link $L$ as follows. 
\begin{itemize}
\item 
If $\mathcal{F}$ has a $(2,1)$-singular fiber, 
each of the meridians of $K_{1}, \dots, K_{m}$ intersects a regular fiber at one point. 
Then $L$ is isotopic to a link $T_{\bbR \bbP^{3}}(0, q; 1)$. 
\item 
If $\mathcal{F}$ has no singular fibers, 
exactly one of the meridians of $K_{1}, \dots, K_{m}$ intersects a regular fiber at two points, 
and each of the others intersects a regular fiber at one point. 
Then $L$ is isotopic to a link $T_{\bbR \bbP^{3}}(0, q; 2)$. 
\end{itemize}
\end{proof}

\begin{lem}
\label{lem:t-link-lift}
The preimage of the link $T_{\bbR \bbP^{3}}(p, q; n)$ by the covering map $\pi \colon S^{3} \to \bbR \bbP^{3}$ 
is the link $T_{S^{3}}(p, -p+2q; n)$. 
\end{lem}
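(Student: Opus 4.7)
The plan is to analyze the covering $\pi \colon S^{3} \to \bbR \bbP^{3}$ at the level of the Heegaard torus $\Sigma$ and then handle the core components separately for $n = 1, 2$.

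First, I would verify that both Heegaard solid tori in $\bbR\bbP^{3}$ are preserved by the deck involution. The core of each represents the generator of $\pi_{1}(\bbR\bbP^{3}) \cong \bbZ/2\bbZ$, so its preimage in $S^{3}$ is connected and is the corresponding $S^{3}$-Heegaard solid torus, double-covering the $\bbR\bbP^{3}$-side with the core wrapping twice. Consequently, $\pi$ restricts to a double cover $\widetilde{\Sigma} := \pi^{-1}(\Sigma) \to \Sigma$, and a meridional disk in the $S^{3}$-side $H_{i}$ projects two-to-one onto a meridional disk in the $\bbR\bbP^{3}$-side $H_{i}$. Writing $\widetilde{\mu}, \widetilde{\lambda}$ for the analogues of $\mu, \lambda$ on $\widetilde{\Sigma}$, on first homology this gives $\pi_{*}(\widetilde{\mu}) = \mu$ and $\pi_{*}(\widetilde{\lambda}) = 2\lambda + \mu$, using the conventions that $\widetilde{\lambda}$ bounds in the $S^{3}$-side $H_{1}$ while $2\lambda + \mu$ bounds in the $\bbR\bbP^{3}$-side $H_{1}$.

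Second, since $\pi$ is a double cover and $\pi_{*}$ is injective on $H_{1}(\widetilde{\Sigma})$, the preimage of any $1$-cycle $C$ on $\Sigma$ is determined by the relation $\pi_{*}[\pi^{-1}(C)] = 2[C]$. Applying this with $[T_{\bbR\bbP^{3}}(p,q;0)] = p\lambda + q\mu$ and solving $\pi_{*}(P\widetilde{\lambda} + Q\widetilde{\mu}) = 2p\lambda + 2q\mu$ yields $P = p$ and $Q = -p + 2q$, matching the cycle class of $T_{S^{3}}(p, -p+2q; 0)$.

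Third, I would upgrade the homological equality to a set-theoretic one by counting components. Let $d = \gcd(p, q)$, $p' = p/d$, $q' = q/d$. The primitive class $p'\lambda + q'\mu$ lies in $\pi_{*}H_{1}(\widetilde{\Sigma}) = \langle \mu,\, 2\lambda + \mu\rangle$ precisely when $p'$ is even, so each of the $d$ parallel components of $T_{\bbR\bbP^{3}}(p,q;0)$ lifts to two parallel simple closed curves if $p'$ is even and to one doubly-covering simple closed curve if $p'$ is odd. A short calculation shows that the resulting total number of components equals $\gcd(p, -p+2q) = \gcd(p, 2q)$ in both cases, so the set-theoretic preimage is $T_{S^{3}}(p, -p+2q; 0)$. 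For $n = 1, 2$, the preimage of the core of each $\bbR\bbP^{3}$-side $H_{i}$ is the core of the corresponding $S^{3}$-side $H_{i}$ (a single simple closed curve, as in the first step), so including these cores yields $T_{S^{3}}(p, -p+2q; n)$ in all cases.

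The main obstacle is pinning down the map $\pi_{*}$ on $H_{1}$ with signs and orientations consistent with the paper's convention that $2\lambda + \mu$ bounds in the $\bbR\bbP^{3}$-side $H_{1}$; once the formula $\pi_{*}(\widetilde{\lambda}) = 2\lambda + \mu$ is established, the remaining homology computation and component count are routine.
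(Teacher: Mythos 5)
Your proof is correct and takes essentially the same route as the paper: both arrange $\pi$ to respect the Heegaard splittings and read off the coordinate change on $H_{1}(\Sigma)$ from the meridian conventions (the paper computes with preimage classes, you with $\pi_{*}$ via $\pi_{*}[\pi^{-1}(C)]=2[C]$ plus an optional component count), arriving at $(p,q)\mapsto(p,-p+2q)$. One small correction: a meridional disk of the $S^{3}$-side $H_{i}$ projects homeomorphically, not two-to-one, onto a meridional disk of the $\bbR\bbP^{3}$-side $H_{i}$ (it is the preimage of the downstairs disk that consists of two disjoint upstairs disks), and this one-to-one picture is exactly what justifies your stated formulas $\pi_{*}(\widetilde{\mu})=\mu$ and $\pi_{*}(\widetilde{\lambda})=2\lambda+\mu$, so the rest of your argument is unaffected.
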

\begin{proof}
To avoid confusion, we put tildes on the symbols for $S^{3}$, 
such as $\tilde{\lambda}$ and $\tilde{\mu}$. 
We may assume that the covering map $\pi$ is compatible with the Heegarrd splittings; 
each $\widetilde{H}_{i}$ in $S^{3}$ is mapped to $H_{i}$ in $\bbR \bbP^{3}$. 
Then the preimage of $T_{\bbR \bbP^{3}}(p, q; n)$ is $T_{S^{3}}(\tilde{p}, \tilde{q}; n)$ for some $\tilde{p}$ and $\tilde{q}$. 
The preimage of $p \lambda + q \mu$ is $p \tilde{\lambda}' + 2q \tilde{\mu}$, 
where $\tilde{\lambda}'$ and $2 \tilde{\mu}$ are the preimages of $\lambda$ and $\mu$, respectively. 
Since $2 \lambda + \mu$ is the meridian of $H_{1}$ 
and lifted to twice of the longitude $\tilde{\lambda}$ of $\widetilde{H}_{0}$, 
we have $2\tilde{\lambda}' + 2\tilde{\mu} = 2\tilde{\lambda}$. 
Hence $p \tilde{\lambda}' + 2q \tilde{\mu} = p \tilde{\lambda} + (-p+2q) \tilde{\mu}$, 
and so $(\tilde{p}, \tilde{q}) = (p, -p+2q)$. 
\end{proof}

\begin{thm}
\label{thm:seifert-main}
Let $L_{0}$ and $L_{1}$ be links in $\bbR \bbP^{3}$ whose complements are Seifert fibered. 
Suppose that the links $\widetilde{L_{0}} = \pi^{-1}(L_{0})$ and $\widetilde{L_{1}} = \pi^{-1}(L_{1})$ in $S^{3}$ are isotopic. 
Then $L_{0}$ and $L_{1}$ are isotopic. 
%Theorem~\ref{thm:main} holds for the links in $\bbR \bbP^{3}$ whose complements are Seifert fibered. 
\end{thm}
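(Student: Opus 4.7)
The plan is to express both links as generalized torus links and transfer the isotopy through the lift. By Proposition~\ref{prop:rp3-seifert}, I write $L_{i} \cong T_{\bbR \bbP^{3}}(p_{i}, q_{i}; n_{i})$, and by Lemma~\ref{lem:t-link-lift}, $\widetilde{L_{i}} \cong T_{S^{3}}(p_{i}, -p_{i} + 2q_{i}; n_{i})$. The assignment $(p, q, n) \mapsto (p, -p + 2q, n)$ is injective with image the ``lift-image'' $S^{3}$-triples $\{(p, r, n) : p \equiv r \pmod{2}\}$. Since $\widetilde{L_{0}}$ and $\widetilde{L_{1}}$ are isotopic in $S^{3}$, Proposition~\ref{prop:torus-link} supplies a finite chain of the $S^{3}$-side relations $(1)$--$(4)$ between the two lifts, and the goal is to convert this into a chain of the $\bbR \bbP^{3}$-side relations $(1)$--$(4)$ between $(p_{0}, q_{0}, n_{0})$ and $(p_{1}, q_{1}, n_{1})$, which yields $L_{0} \cong L_{1}$.

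Under the substitution $q = (p + r)/2$, the linear $S^{3}$-relations $(1)$ and $(2)$ translate verbatim to the $\bbR \bbP^{3}$-relations $(1)$ and $(2)$, as a direct computation shows. For the $S^{3}$-relation $(3)$ applied to a lift-image triple $(p, r, 0)$ with $p > 0$ and $p \mid r$, a parity check shows that the new triple $(p-1, (p-1)r/p, 1)$ itself lies in the lift image if and only if $p \mid q$, which is precisely the hypothesis of the $\bbR \bbP^{3}$-relation $(3)$; when it applies, the targets match. A symmetric statement matches the $S^{3}$-relation $(4)$ with the $\bbR \bbP^{3}$-relation $(4)$.

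The main obstacle is the \emph{parity trap}: when $p$ is even and $r/p$ is even (equivalently, $p \mid 2q$ but $p \nmid q$), the $S^{3}$-relation $(3)$ escapes the lift image, and no single $\bbR \bbP^{3}$-relation realizes that step. To handle it I would analyze the local structure of the $S^{3}$-relation graph near $(p-1, (p-1)r/p, 1)$: the $S^{3}$-relations $(2)$ and $(3)$ are forbidden since $n = 1$, while $(4)$ demands $(p-1)r/p$ to divide $p-1$, which reduces to the impossible integer condition $2(r/p) \mid 1$. Consequently the only $S^{3}$-moves back to the lift image are $(1)$ and the inverse of $(3)$, reaching only $\{(p, r, 0), (-p, -r, 0)\}$---both in the $\bbR \bbP^{3}$-$(1)$-orbit of $(p, q, 0)$. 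A parallel analysis for the $(4)$ parity trap, where $r$ is even and $p/r$ is even, shows that its non-lift-image peninsula also re-enters the lift image only in the $\bbR \bbP^{3}$-$(1)$-orbit of the source. By induction on the length of the $S^{3}$-chain, every excursion outside the lift image can thus be bypassed by lift-image moves that correspond to $\bbR \bbP^{3}$-relations, yielding a chain of $\bbR \bbP^{3}$-relations between $(p_{0}, q_{0}, n_{0})$ and $(p_{1}, q_{1}, n_{1})$, and hence $L_{0} \cong L_{1}$ in $\bbR \bbP^{3}$.
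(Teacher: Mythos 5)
Your proposal is correct and follows essentially the same route as the paper: classify via Proposition~\ref{prop:rp3-seifert}, lift via Lemma~\ref{lem:t-link-lift}, invoke Proposition~\ref{prop:torus-link}, and translate the $S^{3}$-relations $(1)$--$(4)$ into the $\bbR\bbP^{3}$-relations. Your explicit parity-trap analysis is just a more detailed version of the paper's exceptional-case check, where the integrality of $q_{1}$ (forcing, e.g., $p_{0} \mid q_{0}$) is exactly what excludes the steps that would leave the lift image.
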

\begin{proof}
By Proposition~\ref{prop:rp3-seifert}, 
we may assume that $L_{0} = T_{\bbR \bbP^{3}}(p_{0}, q_{0}; n_{0})$ and $L_{1} = T_{\bbR \bbP^{3}}(p_{1}, q_{1}; n_{1})$. 
By Lemma~\ref{lem:t-link-lift}, 
we have $\widetilde{L_{0}} = T_{S^{3}}(p_{0}, -p_{0} + 2q_{0}; n_{0})$ and $\widetilde{L_{1}} = T_{S^{3}}(p_{1}, -p_{1}+2q_{1}; n_{1})$, which are isotopic. 
We need to check the relations (1)--(4) in Proposition~\ref{prop:torus-link}. 
In general cases for the relations (1) and (2), 
$n_{0} = n_{1}$ and $(p_{0}, -p_{0}+2q_{0})$ is equal to $\pm (p_{1}, -p_{1}+2q_{1})$ or $\pm (-p_{1}+2q_{1}, p_{1})$. 
Then $(p_{0}, q_{0})$ is equal to $\pm (p_{1}, q_{1})$ or $\pm (-p_{1}+2q_{1}, q_{1})$. 
Hence $L_{0}$ and $L_{1}$ are isotopic. 

To complete the proof, 
it is sufficient to consider the following exceptional cases. 
In the last case, the relations (3) and (4) are combined. 
\begin{itemize}
\item Suppose that $p_{0} > 0$ divides $-p_{0}+2q_{0}$, $n_{0} = 0$, $n_{1} = 1$, $p_{1} = p_{0}-1$, and $-p_{1}+2q_{1} = (p_{0}-1)(-p_{0}+2q_{0})/p_{0}$. 
Note that $T_{S^{3}}(p_{0}, -p_{0}+2q_{0}; 1) \cong T_{S^{3}}(p_{0}-1, (p_{0}-1)(-p_{0}+2q_{0})/p_{0}; 0)$. 
Then $q_{1} = (p_{0}-1)q_{0}/p_{0}$ and $p_{0}$ divides $q_{0}$. 
Hence $L_{0} = T_{\bbR \bbP^{3}}(p_{0}, q_{0}; 0)$ and $L_{1} = T_{\bbR \bbP^{3}}(p_{0}-1, (p_{0}-1)q_{0}/p_{0}; 1)$ are isotopic. 
\item Suppose that $-p_{0}+2q_{0} > 0$ divides $p_{0}$, $n_{0} = 1$, $n_{1} = 2$, $p_{1} = (-p_{0}+2q_{0}-1)p_{0}/(-p_{0}+2q_{0})$, and $-p_{1}+2q_{1} = -p_{0}+2q_{0}-1$. 
Note that $T_{S^{3}}(p_{0}, -p_{0}+2q_{0}; 2) \cong T_{S^{3}}((-p_{0}+2q_{0}-1)p_{0}/(-p_{0}+2q_{0}), -p_{0}+2q_{0}-1; 1)$. 
Then $q_{1} = (-p_{0}+2q_{0}-1)q_{0}/(-p_{0}+2q_{0})$ and $-p_{0}+2q_{0}$ divides $q_{0}$. 
Hence $L_{0} = T_{\bbR \bbP^{3}}(p_{0}, q_{0}; 1)$ and $L_{1} = T_{\bbR \bbP^{3}}((-p_{0}+2q_{0}-1)p_{0}/(-p_{0}+2q_{0}), (-p_{0}+2q_{0}-1)q_{0}/(-p_{0}+2q_{0}); 2)$ are isotopic. 
\item Suppose that $p_{0} = \pm (-p_{0}+2q_{0}) > 0$, $n_{0} = 0$, $n_{1} = 2$, 
$p_{1} = p_{0}-2$, and $-p_{1}+2q_{1} = \pm (p_{0}-2)$. 
Note that $T_{S^{3}}(p_{0}, -p_{0}+2q_{0}; 0) = T_{S^{3}}(p_{0}, \pm p_{0}; 2) \cong T_{S^{3}}(p_{0}-2, \pm (p_{0}-2); 2)$. 
If $p_{0} = -p_{0}+2q_{0}$, then $q_{0} = p_{0}$ and $q_{1} =  p_{0}-2$. 
Hence $L_{0} = T_{\bbR \bbP^{3}}(p_{0}, q_{0}; 0)$ and $L_{1} = T_{\bbR \bbP^{3}}(p_{0}-2, p_{0}-2; 2)$ are isotopic. 
If $p_{0} = -(-p_{0}+2q_{0})$, then $q_{0} =  q_{1} =  0$. 
Hence $L_{0} = T_{\bbR \bbP^{3}}(p_{0}, q_{0}; 0)$ and $L_{1} = T_{\bbR \bbP^{3}}(p_{0}-2, 0; 2)$ are isotopic. 
\end{itemize}
\end{proof}

We finally remark that the relations (1)--(4) for the isotopy classes of $T_{\bbR \bbP^{3}}(p, q; n)$ are sufficient, 
for otherwise more preimages in $S^{3}$ would be identified. 
Hence: 

\begin{cor}
\label{cor:rp3-torus-link}
If two links $T_{\bbR \bbP^{3}}(p_{0}, q_{0}; n_{0})$ and $T_{\bbR \bbP^{3}}(p_{1}, q_{1}; n_{1})$ are isotopic, 
they are related by combining the relations $(1)$--$(4)$. 
\end{cor}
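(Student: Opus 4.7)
The plan is to derive the corollary by reversing the case analysis in the proof of Theorem~\ref{thm:seifert-main}. Suppose that $T_{\bbR \bbP^{3}}(p_{0}, q_{0}; n_{0})$ and $T_{\bbR \bbP^{3}}(p_{1}, q_{1}; n_{1})$ are isotopic in $\bbR \bbP^{3}$. Lifting the ambient isotopy through $\pi$ and applying Lemma~\ref{lem:t-link-lift}, I obtain that $T_{S^{3}}(p_{0}, -p_{0}+2q_{0}; n_{0})$ and $T_{S^{3}}(p_{1}, -p_{1}+2q_{1}; n_{1})$ are isotopic in $S^{3}$. Proposition~\ref{prop:torus-link} then supplies a finite chain of $S^{3}$-relations $(1)$--$(4)$ carrying the first $S^{3}$-triple to the second.

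The key step is to verify that every $S^{3}$-relation appearing in such a chain, when applied to a triple of the form $(p, -p+2q, n)$, comes from a single $\bbR \bbP^{3}$-relation applied to $(p, q, n)$. For relations $(1)$ and $(2)$ this is a direct algebraic check: negating $(p, -p+2q)$ recovers the lift of $(-p, -q)$, matching $\bbR \bbP^{3}$-relation $(1)$; and swapping to $(-p+2q, p)$ recovers the lift of $(-p+2q, q)$, matching $\bbR \bbP^{3}$-relation $(2)$. For relations $(3)$ and $(4)$ applied to a lifted triple, the requirement that the resulting triple again be a lift forces integrality of its second coordinate, which in turn upgrades the $S^{3}$-divisibility to the stronger $\bbR \bbP^{3}$-divisibility; for example, $(-p+2q)\mid p$ together with integrality of the new $q$-coordinate forces $(-p+2q)\mid q$, exactly the hypothesis needed for $\bbR \bbP^{3}$-relation $(4)$.

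The three possible shapes of such a chain involving a change of $n$ correspond to the three exceptional cases analyzed in the proof of Theorem~\ref{thm:seifert-main}: $n_{0} = 0 \to n_{1} = 1$ via $(3)$; $n_{0} = 1 \to n_{1} = 2$ via $(4)$; and $n_{0} = 0 \to n_{1} = 2$ via $(3)$ followed by $(4)$. The main obstacle is mostly bookkeeping — ensuring that one can reroute any $S^{3}$-chain through intermediate triples that are themselves lifts whenever its endpoints are — but this is exactly what the exceptional case analysis already establishes, so the corollary follows with no new computation beyond noting this reversal.
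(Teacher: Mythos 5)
Your proposal is correct and follows essentially the same route as the paper: the corollary is obtained by lifting the isotopy via Lemma~\ref{lem:t-link-lift}, invoking Proposition~\ref{prop:torus-link}, and reusing the case analysis from the proof of Theorem~\ref{thm:seifert-main} to convert each $S^{3}$-relation between lifted triples into an $\bbR\bbP^{3}$-relation. Your parity/integrality observation (that being a lift forces the stronger divisibility $p\mid q$ or $(-p+2q)\mid q$) just makes explicit what the paper leaves implicit in that case analysis.
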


\section{Prime and JSJ decompositions}
\label{section:jsj}

In this section, we describe the prime and JSJ decompositions of the complement of a link in $\bbR \bbP^{3}$, 
and we prove the main theorem. 
See \cite{Hatcher07} for details of the prime and JSJ decompositions.

The prime decomposition \cite{Kneser29, Milnor62} of a compact orientable 3-manifold is the unique maximal decomposition by the connected sum. 
Let $L$ be a link in a compact orientable irreducible 3-manifold $X$. 
The link $L$ is called a \emph{split} link if the complement of $L$ is reducible. 
We say that $L$ splits to links $L', L_{1}, \dots, L_{m}$ 
if $L = L' \sqcup L_{1} \sqcup \dots \sqcup L_{m}$ and there are mutually disjoint balls $B_{1}, \dots B_{m}$ in $X$ disjoint from $L'$ such that each $B_{i}$ contains $L_{i}$. 
Then a split summand $L_{i}$ can be regarded as a link in $S^{3}$. 
If $X \neq S^{3}$ and $m \geq 1$, then $L$ is a split link. 
The prime decomposition implies the following lemma. 

\begin{lem}[\cite{KMMY25} Lemma 2.9]
\label{lem:split}
Let $L$ be a link in an irreducible compact orientable 3-manifold $X$. 
Then $L$ maximally splits to links $L^{\prime}$, $L_{1}$, \dots, $L_{m}$, 
where $L^{\prime}$ is a (possibly empty) non-split link, and $L_{1}$, \dots, $L_{m}$ are contained in disjoint 3-balls. 
Moreover, this splitting is unique up to permutation of $L_{1}$, \dots, $L_{m}$. 
\end{lem}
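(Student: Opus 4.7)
The plan is to read the decomposition off from the Kneser--Milnor prime decomposition applied to the exterior $M := X \setminus \inter N(L)$. The key mechanism is that, since $X$ is irreducible, any $2$-sphere embedded in $\inter M$ bounds a $3$-ball in $X$; an essential reducing sphere of $M$ therefore produces a ball in $X$ that must contain at least one component of $L$, because otherwise that ball would lie inside $M$ and make the sphere inessential there.

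For existence, I would apply Kneser--Milnor to $M$ to obtain a maximal system $\Sigma_{1}, \dots, \Sigma_{k}$ of pairwise disjoint, pairwise non-parallel essential $2$-spheres in $M$. Each $\Sigma_{i}$ bounds a ball $B_{i} \subset X$; partially order the $B_{i}$ by inclusion and let $B_{i_{1}}, \dots, B_{i_{m}}$ be the outermost ones, which are pairwise disjoint because the $\Sigma_{i}$ are. Setting $L_{j} := L \cap B_{i_{j}}$ and $L^{\prime} := L \setminus \bigsqcup_{j} L_{j}$ yields a splitting in the sense of the paper's definition. To see it is maximal, suppose some $L_{j}$ were split inside $B_{i_{j}} \cong S^{3}$; a splitting sphere for $L_{j}$ would lie in $\inter B_{i_{j}} \setminus L_{j} \subset M$, be essential in $M$, and be placeable disjoint from and non-parallel to every $\Sigma_{i}$, contradicting maximality. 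Similarly, if $L^{\prime}$ were split in $X$, a splitting sphere for $L^{\prime}$ could be isotoped off $\bigcup_{i} \Sigma_{i}$ by a standard innermost-circle argument, again yielding a forbidden essential non-parallel sphere.

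For uniqueness, two maximal splittings of $L$ furnish two maximal systems of pairwise disjoint, pairwise non-parallel essential spheres in $M$; the uniqueness clause of Kneser--Milnor then provides an ambient isotopy of $M$ carrying one system to the other up to reordering, and this isotopy extends across the bounded balls in $X$ to match the $L_{j}$'s up to permutation.

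The step I expect to be most delicate is the innermost-circle isotopy used to push a hypothetical extra splitting sphere off the existing $\Sigma_{i}$: one must check that after simplification the result is still essential and not merely parallel to some $\Sigma_{i}$, and this is exactly where the pairwise-non-parallel clause produced by Kneser--Milnor is used in an essential way.
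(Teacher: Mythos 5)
Your existence construction does not yield a maximal splitting, and the maximality check contains a false step. Concretely, take $X = \bbR \bbP^{3}$ and $L = K_{1} \sqcup K_{2}$ two trefoils lying in disjoint balls $B_{1}, B_{2}$ which are both contained in a larger ball $B_{12} \subset X$. Any maximal system of disjoint, pairwise non-parallel essential spheres in the exterior $M$ must contain, besides spheres $\Sigma_{1}, \Sigma_{2}$ parallel to $\partial B_{1}, \partial B_{2}$, also a sphere $\Sigma_{12}$ parallel to $\partial B_{12}$: it is essential in $M$ (neither side is a ball) and parallel to neither $\Sigma_{1}$ nor $\Sigma_{2}$, so omitting it contradicts maximality. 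The outermost ball is then $B_{12}$ alone, and your recipe outputs $L' = \emptyset$ and the single summand $L_{1} = K_{1} \sqcup K_{2}$, a split link in its ball; this splitting is refinable, hence not maximal. The flaw in your verification is the claim that a further splitting sphere could be placed disjoint from and non-parallel to every $\Sigma_{i}$: here the further splitting sphere is $\Sigma_{1}$ itself, a nested, non-outermost member of the system, so no contradiction with maximality of the sphere system arises. A correct construction must use the nested spheres as well (split off balls innermost-first, argue by induction on the number of components, or pass to the connected-sum decomposition $E(L) \cong E_{X}(L') \# E_{S^{3}}(L_{1}) \# \cdots \# E_{S^{3}}(L_{m})$ of the exterior, which is in effect what \cite{KMMY25} does and what this paper quotes without reproving).

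The uniqueness step also rests on a statement Kneser--Milnor does not provide. Its uniqueness clause asserts that the prime summands are determined up to orientation-preserving homeomorphism; it does not assert that two maximal systems of disjoint, pairwise non-parallel essential spheres are ambient isotopic, and that stronger statement is not available off the shelf (decomposing sphere systems are in general not unique up to isotopy, which is exactly why prime decomposition uniqueness is phrased in terms of summands). What the lemma actually needs is weaker: the partition of the components of $L$ into $L', L_{1}, \dots, L_{m}$ is unique up to permutation. This can be obtained by taking two maximal splittings, isotoping their splitting spheres to be disjoint by an innermost-circle argument and comparing the enclosed sublinks, or by invoking uniqueness of the prime summands of $E(L)$ while keeping track of which boundary tori lie in which summand; as written, your appeal to an ``ambient isotopy of $M$ carrying one system to the other'' is unsupported, and the subsequent extension of that isotopy across the balls is moot. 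A further small point: the hypotheses allow $X = S^{3}$, where both sides of an embedded sphere are balls, so your choice of the ball $B_{i}$ bounded by $\Sigma_{i}$, and hence the notion of ``outermost,'' is not well defined without an extra convention.
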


The JSJ decomposition \cite{JS79, Johannson79} of a compact orientable irreducible 3-manifold with toral boundary is the unique minimal decomposition along mutually disjoint (possibly empty) incompressible tori (called the \emph{JSJ tori}) such that 
each piece after decomposition is Seifert fibered or (algebraically) atoroidal. 
Thurston's geometrization~\cite{Thurston82} implies that an irreducible, boundary-irreducible, atoroidal 3-manifold with non-empty toral boundary admits a hyperbolic metric of finite volume. 
Consequently, the complement of a non-split link in $X$ admits a unique decomposition along tori into pieces each of which is Seifert fibered or hyperbolic. 

To consider the JSJ decomposition of the complement of a link in $\bbR \bbP^{3}$, 
we need to describe a torus in $\bbR \bbP^{3}$. 
We call a submanifold $Y$ of a 3-manifold $X$ that satisfies the following conditions a \emph{knotted hole ball}. 
\begin{itemize}
\item There is a closed 3-ball $B$ contained in $X$. 
\item There is an embedding of a 1-handle $\iota \colon [0,1] \times D^{2} \to B$. 
\item $\iota^{-1}(\partial B) = \{ 0,1 \} \times D^{2}$. 
\item $Y = \overline{B \setminus \iota ([0,1] \times D^{2})}$. 
\item $Y$ is not a solid torus. 
\end{itemize}
Note that the complement of a knotted hole ball in $X$ is the connected sum of $X$ and $S^{1} \times D^{2}$.

\begin{lem}
\label{lem:torus}
An embedded torus in $\bbR \bbP^{3}$ bounds a solid torus or a knotted hole ball. 
\end{lem}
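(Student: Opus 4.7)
The plan is to use the standard compression-disk argument, exploiting the fact that $\bbR \bbP^{3}$ has tiny fundamental group. First, as noted in the proof of Proposition~\ref{prop:rp3-seifert}, $\bbR \bbP^{3}$ contains no non-separating tori, so $T$ separates $\bbR \bbP^{3}$ into two compact submanifolds $X_{0}$ and $X_{1}$ with $\partial X_{0} = \partial X_{1} = T$. Moreover, $T$ cannot be incompressible, since there is no injection $\pi_{1}(T) = \bbZ^{2} \hookrightarrow \pi_{1}(\bbR \bbP^{3}) = \bbZ / 2 \bbZ$. Hence $T$ admits a compression disk $D$ on one side, say $D \subset X_{0}$, with $\partial D$ essential on $T$.

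Take a regular neighborhood $N(D) \cong D \times [-1, 1]$ of $D$ in $\bbR \bbP^{3}$ with $N(D) \cap T = (\partial D) \times [-1,1]$. Compressing $T$ along $D$ produces the $2$-sphere
\[
S = \bigl(T \setminus (\partial D) \times (-1,1)\bigr) \cup \bigl(D \times \{-1, 1\}\bigr).
\]
Since $\bbR \bbP^{3}$ is irreducible and not homeomorphic to $S^{3}$, exactly one side of $S$ is a $3$-ball $B$; the other side is homeomorphic to $\bbR \bbP^{3}$ with an open $3$-ball removed.

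Let $X_{0}^{\prime} = \overline{X_{0} \setminus N(D)}$, the result of cutting $X_{0}$ along $D$; it is a compact submanifold of $\bbR \bbP^{3}$ whose boundary is $S$. Either $X_{0}^{\prime} = B$, or else $\overline{\bbR \bbP^{3} \setminus X_{0}^{\prime}} = X_{1} \cup N(D) = B$. In the first case, $X_{0}$ is recovered from the ball $B$ by reattaching the $1$-handle $N(D)$ along the two disks $D \times \{-1, 1\} \subset \partial B$, so $X_{0}$ is a genus-$1$ handlebody, i.e., a solid torus. In the second case, $X_{1}$ is obtained from the ball $B$ by removing the interior of the embedded $1$-handle $N(D)$, which is precisely the construction of a knotted hole ball (degenerating to a solid torus exactly when the $1$-handle is unknotted in $B$).

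The only real content is the compressibility of $T$, which is immediate from the $\pi_{1}$ argument; the rest is bookkeeping between the two cases for which side of $S$ is a ball. I do not expect a serious obstacle.
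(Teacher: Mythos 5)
Your proof is correct and follows essentially the same route as the paper: compressibility of $T$ from the finiteness of $\pi_{1}(\bbR \bbP^{3})$, compression to a sphere that bounds a ball by irreducibility, and then recognizing the torus as bounding either a solid torus or the complement of a 1-handle in that ball. Your case analysis (by which side of the sphere is the ball) is just a more detailed bookkeeping of the paper's dichotomy of whether $B$ and $D$ are disjoint.
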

\begin{proof}
Since $\pi_{1}(\bbR \bbP^{3}) \cong \bbZ / 2\bbZ$ is finite, 
any torus $T$ in $\bbR \bbP^{3}$ is compressible. 
A sphere $S$ is obtained by compressing $T$ along a compressing disk $D$. 
The sphere $S$ bounds a ball $B$ since $\bbR \bbP^{3}$ is irreducible. 
If $B$ and $D$ are disjoint, then $T$ bounds a solid torus. 
Otherwise, $T$ bounds the complement of a 1-handle in $B$, which is a solid torus or a knotted hole ball. 
\end{proof}

Let $L$ be a non-split link in $\bbR \bbP^{3}$. 
We say that a JSJ piece $M$ of the complement of $L$ is \emph{outermost} 
if $\overline{\bbR \bbP^{3} \setminus M}$ is a disjoint union of solid tori and knotted hole balls. 
This definition can be applied to a non-split link in $S^{3}$ or $S^{1} \times D^{2}$. 
However, every JSJ piece for a non-split link in $S^{3}$ is outermost. 
The outermost piece for a non-split link in $S^{1} \times D^{2}$ contains the boundary of $S^{1} \times D^{2}$, and so it is unique. 
In any case, an outermost piece can be re-embedded as a link complement 
by reattaching the 1-handles adjacent to the knotted hole balls 
as in \cite[Proposition 2.2]{Budney06} and \cite[Lemma 3.6]{KMMY25}. 

\begin{lem}
\label{lem:outermost}
The complement of a link $L$ in $\bbR \bbP^{3}$ has at least one outermost JSJ piece. 
\end{lem}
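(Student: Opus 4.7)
The plan is to organize the pieces of $\bbR\bbP^{3}$ cut out by the JSJ tori of $E(L)$ together with the meridional tori $\partial N(L)$ into a finite tree, and then pick a leaf. The whole argument is driven by Lemma~\ref{lem:torus}: every embedded torus in $\bbR\bbP^{3}$ bounds a solid torus or a knotted hole ball, and therefore separates.

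First I would assemble the family $\mathcal{T}$ of all JSJ tori of $E(L)$ together with all components of $\partial N(L)$, and form the dual graph $\Gamma$ whose vertices are the JSJ pieces of $E(L)$ together with the components of $N(L)$, and whose edges are the tori of $\mathcal{T}$. Since every component of $\mathcal{T}$ separates $\bbR\bbP^{3}$ by Lemma~\ref{lem:torus}, a standard argument (any embedded cycle in $\Gamma$ would yield a loop in $\bbR\bbP^{3}$ meeting some $T \in \mathcal{T}$ an odd number of times, contradicting separating-ness) shows that $\Gamma$ is a finite tree.

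Next I would trim the tree. Each component of $N(L)$ is bounded by a single meridional torus and hence is a leaf of $\Gamma$; deleting these leaves yields a subtree $\Gamma_{0}$ whose vertices are exactly the JSJ pieces and whose edges are the JSJ tori. If $\Gamma_{0}$ has no edges, then $E(L)$ is itself the unique JSJ piece $M$, and $\overline{\bbR\bbP^{3} \setminus M} = N(L)$ is a disjoint union of solid tori, so $M$ is outermost. Otherwise $\Gamma_{0}$ has at least one edge and hence a leaf; I take $M$ to be the JSJ piece corresponding to such a leaf, so that $\partial M$ contains exactly one JSJ torus $T$ together with possibly some meridional tori $\partial N(K_{i_{1}}), \dots, \partial N(K_{i_{k}})$.

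Finally I would analyze the components of $\overline{\bbR\bbP^{3} \setminus M}$. Each component adjacent to a meridional torus in $\partial M$ is exactly a solid torus $N(K_{i_{j}})$. The remaining component $W$ corresponds to the rest of the tree across $T$, and its boundary in $\bbR\bbP^{3}$ is precisely the single torus $T$, so Lemma~\ref{lem:torus} identifies $W$ as either a solid torus or a knotted hole ball. Hence $\overline{\bbR\bbP^{3} \setminus M}$ is a disjoint union of solid tori and at most one knotted hole ball, so $M$ is outermost. The only mildly subtle step is verifying that $\Gamma$ is genuinely a tree; once that structural fact is in hand, the existence of a leaf and the identification of the regions complementary to it are immediate from Lemma~\ref{lem:torus} and the definition.
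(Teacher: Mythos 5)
Your reduction to the tree $\Gamma_0$ is fine, but the final step contains a genuine gap: you assert that for a leaf piece $M$ with unique JSJ torus $T$, the complementary region $W$ with $\partial W = T$ is a solid torus or a knotted hole ball ``by Lemma~\ref{lem:torus}''. Lemma~\ref{lem:torus} only says that $T$ bounds a solid torus or a knotted hole ball on \emph{some} side, and that side may well be the one containing $M$ rather than $W$. A concrete failure: let $K_{0}$ be a hyperbolic knot in $\bbR\bbP^{3}$ representing the non-trivial element of $\pi_{1}(\bbR\bbP^{3})$, and let $L$ be a satellite of $K_{0}$ with hyperbolic pattern, so the JSJ tree of the complement is a single edge whose two vertices are the companion exterior $E(K_{0})$ and the pattern piece $P$ inside the companion solid torus $V$. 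Both vertices are leaves of $\Gamma_{0}$, but if you choose the leaf $P$, then $\overline{\bbR\bbP^{3}\setminus P}$ contains $W=\overline{\bbR\bbP^{3}\setminus V}=E(K_{0})$, which is neither a solid torus (it is hyperbolic) nor a knotted hole ball (its complement is the solid torus $V$, whereas the complement of a knotted hole ball in $\bbR\bbP^{3}$ has fundamental group $\bbZ \ast \bbZ/2\bbZ$). Here Lemma~\ref{lem:torus} is satisfied because the solid torus bounded by $T=\partial V$ lies on the side of $P$, not of $W$; so an arbitrary leaf need not be outermost, and your argument as written proves too much (it would show \emph{every} leaf is outermost, which is false).

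What is missing is a mechanism for choosing the \emph{correct} vertex, i.e.\ one for which the solid torus or knotted hole ball guaranteed by Lemma~\ref{lem:torus} lies on the side away from the chosen piece for every incident torus. The paper does exactly this: it orients each edge of the JSJ tree toward the piece contained in the solid torus or knotted hole ball bounded by the corresponding torus (leaving Heegaard-torus edges unoriented, and checking that a torus cannot bound a solid torus on one side and a knotted hole ball on the other), constructs an integer-valued height function on the vertices compatible with these orientations using the tree structure, and takes a vertex realizing a local minimum; for such a vertex every complementary region across a boundary torus is a solid torus or knotted hole ball, hence the piece is outermost. Your tree $\Gamma_{0}$ is the right object, but you need this orientation/minimum argument (or an equivalent selection principle) in place of ``take any leaf''.
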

\begin{proof}
The JSJ graph $G_{L} = (V_{L}, E_{L})$ for the complement of $L$ is defined as follows. 
\begin{itemize}
\item Each vertex in the vertex set $V_{L}$ corresponds to a JSJ piece. 
\item Each edge in the edge set $E_{L}$ corresponds to a JSJ torus. 
\item The endpoints of an edge correspond to the adjacent pieces about the corresponding JSJ torus. 
\end{itemize}
Since any torus in $\bbR \bbP^{3}$ is separating, the graph $G_{L}$ is a tree. 
We give $G_{L}$ the structure of a partially directed graph as follows. 
Let $e$ be an edge corresponding to a JSJ torus $T$. 
\begin{itemize}
\item If $T$ bounds a knotted hole ball, then $e$ has the orientation to the piece contained in the knotted hole ball. 
\item If $T$ bounds exactly one solid torus, then $e$ has the orientation to the piece contained in the solid torus. 
\item If $T$ bounds two solid tori (i.e., $T$ is a Heegaard torus), then $e$ is unoriented. 
\end{itemize}
Note that $T$ cannot bound a knotted hole ball to one side and a solid torus to the other side, for otherwise the ambient space would be $S^{3}$. 
Hence the orientation is well defined. 

Suppose that a function $f \colon V_{L} \to \bbZ$ satisfies the following. 
Let $v_{0}$ and $v_{1}$ be the endpoints of an edge $e$. 
\begin{itemize}
\item If $e$ is oriented from $v_{0}$ to $v_{1}$, then $f(v_{0}) + 1 = f(v_{1})$. 
\item If $e$ is unoriented, then $f(v_{0}) = f(v_{1})$. 
\end{itemize}
Since $G_{L}$ is a tree, such a function $f$ exists. 
Suppose that a vertex $v \in V_{L}$ attains a local minimum of $f$. 
Then each edge incident to $v$ is oriented from $v$ or unoriented. 
Let $M$ denote the JSJ piece corresponding to $v$. 
Each boundary component of $M$ is a JSJ torus or the boundary of a neighborhood of a component of $L$. 
The orientations of edges imply that each boundary component of $M$ bounds a solid torus or a knotted hole ball disjoint from $M$. 
Hence $M$ is outermost. 
Since $G_{L}$ is finite, the minimum of $f$ exists. 
\end{proof}

For a non-split link $L \subset \bbR \bbP^{3}$, 
let $\widetilde{L} \subset S^{3}$ denote the preimage of $L$ by the covering map $\pi \colon S^{3} \to \bbR \bbP^{3}$. 
By \cite[Lemma 2.11]{KMMY25} and the fact that there exist no embedded Klein bottles in $\bbR \bbP^{3}$ \cite{BW69, GT22}, 
the JSJ tori of the complement of $\widetilde{L}$ is the preimage of the JSJ tori of the complement of $L$ by the covering map $\pi$. 
An outermost JSJ piece for $L$ is characterized as follows. 

\begin{lem}
\label{lem:outermost-lift}
Let $M$ be a JSJ piece of the complement of a link $L$ in $\bbR \bbP^{3}$. 
The piece $M$ is outermost if and only if 
\begin{itemize}
\item the preimage $\widetilde{M} = \pi^{-1}(M)$ is connected, and 
\item the number of components of $\overline{S^{3} \setminus \widetilde{M}}$ that are not solid tori is even. 
\end{itemize}
\end{lem}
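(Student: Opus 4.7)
My approach is to analyze how solid tori and knotted hole balls in $\bbR \bbP^{3}$ lift via the double cover $\pi$. The crucial observation is that a knotted hole ball $Y \subset \bbR \bbP^{3}$ is, by definition, contained in a $3$-ball $B$, so $\pi_{1}(Y) \to \pi_{1}(\bbR \bbP^{3}) \cong \bbZ/2\bbZ$ factors through $\pi_{1}(B) = 1$ and is therefore trivial; consequently $\pi^{-1}(Y)$ consists of two disjoint knotted hole balls in $S^{3}$, one sitting inside each lift of $B$. In contrast, a solid torus in $\bbR \bbP^{3}$ lifts to either two disjoint solid tori or a single solid torus, depending on whether its core is null-homotopic.

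For the forward direction, I assume $M$ is outermost, so each component $C$ of $\overline{\bbR \bbP^{3} \setminus M}$ is a solid torus or knotted hole ball. The non-solid-torus components of $\overline{S^{3} \setminus \widetilde{M}}$ then arise exactly as lifts of knotted hole ball $C$'s, each contributing two, which gives (b) immediately. For (a), if $\widetilde{M}$ were disconnected then $\pi_{1}(M) \to \bbZ/2\bbZ$ would be trivial, hence also $\pi_{1}(T) \to \bbZ/2\bbZ$ for every boundary torus $T$ of $M$; the outermost assumption then forces each adjacent $C$ to be trivially covered too, automatic for a knotted hole ball and holding for a solid torus $C$ because its $\pi_{1}$-generator is the longitude of $T$. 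So the double cover $S^{3} \to \bbR \bbP^{3}$ would trivialize globally, contradicting its non-triviality.

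For the backward direction, I first note that $\widetilde{L}$ is non-split, since covers of irreducible $3$-manifolds are irreducible by invariance of $\pi_{2}$. Hence $\widetilde{M}$ is a JSJ piece of $S^{3} \setminus \widetilde{L}$, and since every JSJ piece for a non-split link in $S^{3}$ is outermost, each component $\widetilde{C}$ of $\overline{S^{3} \setminus \widetilde{M}}$ is a solid torus or knotted hole ball in $S^{3}$. I then analyze each component $C$ of $\overline{\bbR \bbP^{3} \setminus M}$ case by case: if $\pi^{-1}(C)$ has two components, a $\sigma$-invariant Alexander sphere separating them descends to a ball in $\bbR \bbP^{3}$ containing $C$, identifying $C$ as a solid torus or knotted hole ball; if $\pi^{-1}(C)$ is a single $\sigma$-invariant solid torus, the uniqueness of free involutions on a solid torus with torus quotient (rather than Klein bottle quotient, which would be forbidden in $\bbR \bbP^{3}$) identifies $C$ as a solid torus. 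The main obstacle is the remaining subcase—a single $\sigma$-invariant knotted hole ball lift—in which the quotient $C$ would have solid torus complement but be itself neither a solid torus nor a knotted hole ball, since $\pi_{1}(C) \to \pi_{1}(\bbR \bbP^{3})$ would have to surject. I expect to exclude this subcase using the parity in (b), observing that each such component contributes exactly $1$ to the non-solid-torus count, combined with a structural argument on the JSJ graph showing that an even positive number of such configurations cannot coexist around a single outermost $\widetilde{M}$.
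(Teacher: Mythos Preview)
Your forward direction is correct and matches the paper's, just phrased via $\pi_1$ rather than componentwise lifting.

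Your backward direction, however, has genuine gaps. First, in the two-component subcase your assertion that ``a $\sigma$-invariant Alexander sphere separating them descends to a ball in $\bbR\bbP^{3}$'' is not right as stated: a $\sigma$-invariant sphere in $S^{3}$ is acted on freely by $\sigma$, so its image in $\bbR\bbP^{3}$ is an $\bbR\bbP^{2}$, not the boundary of a ball, and in any case you have not produced such a sphere separating the two lifts. More substantively, knowing that $C$ is \emph{abstractly} homeomorphic to a knotted hole ball does not by itself make $C$ a knotted hole ball \emph{in} $\bbR\bbP^{3}$; you still need $C$ to sit inside a $3$-ball in $\bbR\bbP^{3}$, and this is exactly what you have not established. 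Second, your final subcase is explicitly left unfinished: you only ``expect'' a structural JSJ-graph argument to rule out an even positive number of singly-covered knotted-hole-ball lifts.

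The paper sidesteps all of this by proving the contrapositive, which is considerably cleaner. If $M$ is not outermost, pick a boundary torus $T$ whose complementary region $C$ is neither a solid torus nor a knotted hole ball; then Lemma~\ref{lem:torus} forces the \emph{other} side of $T$---the side containing $M$---to be a solid torus or a knotted hole ball. If $M$ lies in a knotted hole ball, $\widetilde{M}$ is disconnected and (a) fails. If $M$ lies in a solid torus $S$ with $T$ not a Heegaard torus, then the components of $\overline{S^{3}\setminus\widetilde{M}}$ that are not solid tori are the paired lifts of the knotted hole balls inside $S$ together with the single connected lift of $\overline{\bbR\bbP^{3}\setminus S}$ (which cannot be a solid torus since finite covers of non-solid-tori are not solid tori), giving an odd count and violating (b). This avoids equivariant sphere constructions, classification of free involutions on knotted hole balls, and any JSJ-graph combinatorics; I recommend you adopt this route.
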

\begin{proof}
Since a knotted hole ball is contained in a 3-ball $B$, the preimage of a knotted hole ball in $\bbR \bbP^{3}$ consists of its two copies, which are contained in the preimage of $B$. 
The preimage of a solid torus in $\bbR \bbP^{3}$ is one or two solid tori. 

Suppose that $M$ is outermost. 
In other words, each component of $\overline{\bbR \bbP^{3} \setminus M}$ is a solid torus or a knotted hole ball. 
Assume that $\widetilde{M}$ is disconnected. 
Then the preimage of each component of the boundary $\partial \widetilde{M}$ consists of its two copies. 
Hence the preimage of each solid torus component of $\overline{\bbR \bbP^{3} \setminus M}$ also consists of its two copies. 
Since the preimage of each knotted hole ball must also consist of its two copies, 
the union $S^{3}$ of $\widetilde{M}$, the solid tori, and the knotted hole balls would be disconnected.
Hence $\widetilde{M}$ is connected. 
Since the components of $\overline{S^{3} \setminus \widetilde{M}}$ that are not solid tori 
are the preimage of knotted hole ball components of $\overline{\bbR \bbP^{3} \setminus M}$, 
their number is even. 

Conversely, suppose that $M$ is not outermost. 
Then Lemma~\ref{lem:torus} implies that $M$ is contained in a solid torus or a knotted hole ball. 
If $M$ is contained in a knotted hole ball, 
the preimage $\widetilde{M}$ is disconnected. 
Suppose that there is a JSJ torus $T \subset \partial M$ 
such that $M$ is contained in a solid torus $S$ bounded by $T$. 
Since an embedded torus in a solid torus bounds a solid torus or a knotted hole ball \cite[Lemma 3.3]{KMMY25}, the components of $\overline{S \setminus M}$ consist of solid tori and knotted hole balls. 
If $T$ is a Heegaard torus of $\bbR \bbP^{3}$, then $M$ is outermost. 
If $\widetilde{M}$ is connected, and $T$ is not a Heegaard torus, 
then the components of $\overline{S^{3} \setminus \widetilde{M}}$ that are not solid tori consist of 
\begin{itemize}
\item the preimages of knotted hole balls contained in the solid torus $S$, 
which are pairs, and 
\item the preimage of $\overline{\bbR \bbP^{3} \setminus S}$, 
which is connected, 
\end{itemize}
and so their number is odd. 
Note that if an orientable 3-manifold $N$ is not a solid torus, a finite cover of $N$ cannot be a solid torus. 
\end{proof}

Using the above settings, we prove the main theorem in the same manner as \cite{KMMY25}. 

\begin{proof}[Proof of Theorem~\ref{thm:main}]
The argument for split links is identical to that in the proof of \cite[Theorem 4.5]{KMMY25}. 
Suppose that a link $L_{0}$ maximally splits to $L_{0}', L_{0,1}, \dots, L_{0,m}$, 
where $L_{0}'$ is a (possibly empty) non-split link. 
Then $\widetilde{L_{0}} = \pi^{-1}(L_{0})$ maximally splits to $\widetilde{L_{0}'} = \pi^{-1}(L_{0}')$ and $L_{0,j}^{k}$ for $1 \leq j \leq m$ and $k = 1,2$, 
where $L_{0,j}^{k}$ is a copy of $L_{0,j}$ as a link in $S^{3}$. 
The same applies to $L_{1}$. 
Since $\widetilde{L_{0}}$ and $\widetilde{L_{1}}$ are isotopic, 
their split summands are isotopic by Lemma~\ref{lem:split}. 
Hence we may assume that $L_{0}$ and $L_{1}$ are non-split links. 

We consider the JSJ decompositions of the complements of $L_{0}$, $L_{1}$, and $\widetilde{L_{0}} \cong \widetilde{L_{1}}$. 
As mentioned above, the JSJ decomposition of the complement of $\widetilde{L_{0}} \cong \widetilde{L_{1}}$ is the lifts of those of $L_{0}$ and $L_{1}$. 
By Lemma~\ref{lem:outermost}, there is an outermost JSJ piece $M_{0}$ of the complement of $L_{0}$. 
By Lemma~\ref{lem:outermost-lift}, $\widetilde{M_{0}} = \pi^{-1}(M_{0})$ is a single JSJ piece of the complement of $\widetilde{L_{0}}$. 
There is a JSJ piece $M_{1}$ of the complement of $L_{1}$ such that $\widetilde{M_{1}} = \pi^{-1}(M_{1})$ is isotopic to $\widetilde{M_{0}}$. 
By Lemma~\ref{lem:outermost-lift}, the piece $M_{1}$ is also outermost. 
Since $M_{0}$ and $M_{1}$ are outermost, 
they can be re-embedded as the complements of links in $\bbR \bbP^{3}$. 
Since the preimage of a knotted hole ball in $\bbR \bbP^{3}$ consists of its two copies in $S^{3}$, 
the pieces $\widetilde{M_{0}}$ and $\widetilde{M_{1}}$ are equivariantly re-embedded as link complements. 
Since $M_{0}$ and $M_{1}$ are hyperbolic or Seifert fibered, 
they are isotopic in $\bbR \bbP^{3}$ by Theorems \ref{thm:hyp-main} and \ref{thm:seifert-main}. 

We show that the isotopy between $M_{0}$ and $M_{1}$ extends to an isotopy between the complements of $L_{0}$ and $L_{1}$ 
using an isotopy between the complements of $\widetilde{L_{0}}$ and $\widetilde{L_{1}}$. 
Since $M_{0}$ and $M_{1}$ are outermost, 
the remaining regions consist of solid tori and knotted hole balls. 
Since the preimage of a knotted hole ball in $\bbR \bbP^{3}$ consists of its two copies in $S^{3}$, 
the isotopy extends to the knotted hole balls. 
By \cite[Theorem 4.5]{KMMY25} for links in the solid torus, 
the isotopy in $S^{3}$ induces an isotopy between links in the remaining solid tori contained in $\bbR \bbP^{3}$. 
Therefore $L_{0}$ and $L_{1}$ are isotopic. 
\end{proof}

\section*{Acknowledgements} 
The author is grateful to Yuya Koda, Yuka Kotorii, Sonia Mahmoudi, Elisabetta A. Matsumoto, and Yuta Nozaki for their helpful discussions. 
This work was supported by the World Premier International Research Center Initiative Program, International Institute for Sustainability with Knotted Chiral Meta Matter (WPI-SKCM$^2$), MEXT, Japan, 
and JSPS Program for Forming Japan's Peak Research Universities (J-PEAKS) Grant Number JPJS00420230011. 
%The author is supported by JSPS KAKENHI Grant Numbers 19K14530. 

\bibliographystyle{plain}
\bibliography{ref-uf2p}

\end{document}